\documentclass[a4paper,11pt,leqno]{article}
\usepackage{tikz-cd}
\usepackage{amsmath,amssymb,amsthm,mathrsfs,enumitem}
\usepackage{xcolor}
\usepackage[left=25mm,right=25mm,top=30mm,bottom=30mm]{geometry}
\usepackage{cleveref}
\usepackage{graphicx}

\title{Relative stable equivalences of Morita type
for the principal blocks of finite groups
and relative Brauer indecomposability}
\author{Naoko Kunugi and Kyoichi Suzuki*}
\date{}

\crefname{definition}{Definition}{Definitions}
\crefname{theorem}{Theorem}{Theorems}
\crefname{lemma}{Lemma}{Lemmas}
\crefname{proposition}{Proposition}{Theorems}
\crefname{corollary}{Corollary}{Corollaries}
\crefname{remark}{Remark}{Remarks}
\crefname{table}{Table}{Tables}
\crefname{section}{Section}{Sections}

\newtheorem{definition}{Definition}[section]
\newtheorem{theorem}[definition]{Theorem}
\newtheorem{lemma}[definition]{Lemma}
\newtheorem{proposition}[definition]{Proposition}

\newtheorem{remark}[definition]{Remark}

\begin{document}
\maketitle
    % main theorem

    \noindent{Abstract.}
    We discuss representations of finite groups
    having a common central $p$-subgroup $Z$,
    where $p$ is a prime number.
    For the principal $p$-blocks,
    we give a method of constructing
    a relative $Z$-stable equivalence of Morita type,
    which is a generalization of a stable equivalence of Morita type,
    and was introduced by Wang and Zhang in a more general setting.
    Then we generalize Linckelmann's results on stable equivalences of Morita type
    to relative $Z$-stable equivalences of Morita type.
    We also introduce the notion of relative Brauer indecomposability,
    which is a generalization of the notion of Brauer indecomposability.
    We give an equivalent condition for Scott modules
    to be relatively Brauer indecomposable,
    which is an analogue of that given by Ishioka and the first author.

    \section{Introduction}

    Morita equivalences for the principal $p$-blocks of finite groups
    has been constructed by using results
    due to Brou\'{e} \cite{broue-1994} and Linckelmann \cite{Linckelmann-1996},
    where $p$ is a prime.
    Brou\'{e} \cite{broue-1994} introduced the notion
    of stable equivalences of Morita type,
    and developed a method of constructing those
    for principal blocks.
    Linckelmann \cite{Linckelmann-1996} gave an equivalent condition 
    for stable equivalences of Morita type
    between indecomposable selfinjective algebras
    to be in fact Morita equivalences.
    Thus, we may construct a stable equivalence of Morita type
    by using Brou\'{e}'s method and lift it to a Morita equivalence
    by using Licnkelmann's result.
    In this way, Morita equivalences has been constructed in some cases
    (see for example \cite{Okuyama-preprint-1997}
    and \cite{Koshitani-Lassueur-2020}).

    However we cannot use Brou\'{e}'s method
    in the case where $G$ and $G'$ have a common nontrivial central $p$-subgroup.
    If $G$ and $G'$ are finite groups with a common Sylow $p$-subgroup $P$
    and the same fusion system on $P$,
    then Brou\'{e}'s method constructs a stable equivalence of Morita type
    between the principal blocks of $G$ and $G'$
    by gluing Morita equivalences between the principal blocks of
    the centralizers of the nontrivial subgroups of $P$.
    Hence we cannot use the method
    if $G$ and $G'$ have a common nontrivial central $p$-subgroup
    because its centralizers are $G$ and $G'$ themselves.

    % Although an approach for this case was given 
    % in \cite{Koshitani-Lassueur-2020-2},
    % it seems that its proof is not sufficient.
    In \cite{Koshitani-Lassueur-2020-2},
    it was claimed to have shown that
    for a common central $p$-subgroup $Z$ of $G$ and $G'$,
    the principal blocks of $G/Z$ and $G'/Z$ are Morita equivalent
    if and only if so are the principal blocks of $G$ and $G'$
    (see \cite[Lemma 3.3 (b)]{Koshitani-Lassueur-2020-2}).
    Hence we may construct a Morita equivalence between
    the principal blocks of $G$ and $G'$
    since we may construct that between
    the principal blocks of $G/Z$ and $G'/Z$
    by using the results due to Brou\'{e} and Linckelmann.
    However it seems that the proof of \cite[Lemma 3.1 (b)]{Koshitani-Lassueur-2020-2},
    which is needed for the proof of \cite[Lemma 3.3 (b)]{Koshitani-Lassueur-2020-2},
    is not sufficient.

    Therefore we give a method of constructing Morita equivalences
    for the principal blocks of finite groups
    with a common nontrivial central $p$-subgroup
    by generalizing the results due to Brou\'{e} and Linckelmann
    to relative stable equivalences of Morita type.
    The notion of relative stable equivalences of Morita type
    was introduced by Wang and Zhang \cite{Wang-Zhang-2018},
    and is a generalization of stable equivalences of Morita type.
    We use a subgroup version of this notion (see \cref{sec-relative-stm}).
    If $G$ and $G'$ have a common nontrivial central $p$-subgroup $Z$,
    we give a method of constructing
    a relative $Z$-stable equivalence of Morita type between
    the principal blocks of $G$ and $G'$:
    
    \begin{theorem}\label{generalization-Broue-gluing}
        Let $k$ be an algebraically closed filed of characteristic $p>0$.
        Let $G$ and $G'$ be finite groups with a common Sylow $p$-subgroup $P$
        such that $G$ and $G'$ have the same fusion system on $P$,
        and $M=S(G\times G',\Delta P)$,
        the Scott $k[G\times G']$-module with vertex $\Delta P$.
        Assume that $Z$ is a subgroup of $P$ central in $G$ and $G'$.
        Then the following are equivalent:
        \begin{enumerate}[label={$(\mathrm{\roman*})$}]
            \item The pair $(M(\Delta Q),{M(\Delta Q)}^{*})$
            of the Brauer construction of $M$
            with respect to $\Delta Q$ and its dual
            induces a Morita equivalence 
            between the principal blocks of $kC_{G}(Q)$ and $kC_{G'}(Q)$
            for any subgroup $Q$ of $P$ properly containing $Z$.
            \item The pair $(M,M^{*})$ induces a relative
            $Z$-stable equivalence of Morita type
            between the principal blocks of $kG$ and $kG'$.
        \end{enumerate}
    \end{theorem}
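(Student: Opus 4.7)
The plan is to adapt Brou\'e's gluing argument \cite{broue-1994} for stable equivalences of Morita type to the relative setting governed by $\Delta Z$. In both directions, the central objects are the bimodule $M \otimes_{kG'} M^{*}$ together with a natural decomposition $M \otimes_{kG'} M^{*} \cong B_{0}(kG) \oplus X$, where the summand $B_{0}(kG)$ arises because $M$ is a Scott module (the trivial $kG$-module sits in its head), and $X$ denotes the complement. Two preliminary observations drive the whole argument. First, since $Z$ is central in $G$ and $G'$, the diagonal subgroup $\Delta Z$ is central in $G \times G'$ and in $G \times G$, and acts trivially on $M$, $M^{*}$, $B_{0}(kG)$, hence on $X$; therefore $X$ is inflated from $(G \times G)/\Delta Z$ and $\Delta Z$ is contained in the vertex of every indecomposable summand of $X$. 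Second, one has the standard compatibility of the Brauer functor with tensor products of $p$-permutation modules, namely
\[
    \bigl(M \otimes_{kG'} M^{*}\bigr)(\Delta Q) \;\cong\; M(\Delta Q) \otimes_{kC_{G'}(Q)} M(\Delta Q)^{*},
\]
together with Brauer's third main theorem identifying $B_{0}(kG)(\Delta Q)$ with $B_{0}(kC_{G}(Q))$.

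For the implication (ii)$\Rightarrow$(i), the $\Delta Z$-projectivity of $X$ yields $X(\Delta Q)=0$ whenever $Q > Z$, since the centrality of $\Delta Z$ forces its only $(G \times G)$-conjugate to be itself, so $\Delta Q$ with $|Q|>|Z|$ is not subconjugate to $\Delta Z$. Combining the two observations above, one extracts
\[
    M(\Delta Q) \otimes_{kC_{G'}(Q)} M(\Delta Q)^{*} \;\cong\; B_{0}(kC_{G}(Q))
\]
as $(kC_{G}(Q), kC_{G}(Q))$-bimodules, and the symmetric argument applied to $M^{*} \otimes_{kG} M$ provides the complementary isomorphism. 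Thus $(M(\Delta Q), M(\Delta Q)^{*})$ induces the asserted Morita equivalence.

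For the converse (i)$\Rightarrow$(ii), it suffices to show that every indecomposable summand $Y$ of $X$ has vertex $(G \times G)$-conjugate into $\Delta Z$. Since $M$ is a $p$-permutation module with vertex $\Delta P$, a Mackey-type analysis of $M \otimes_{kG'} M^{*}$, together with the hypothesis that $G$ and $G'$ have the same fusion system on $P$, shows that the vertex of $Y$ is $(G \times G)$-conjugate to $\Delta Q$ for some $Q \leq P$. The triviality of the $\Delta Z$-action forces $\Delta Z \leq \Delta Q$, so $Z \leq Q$. Suppose for contradiction that $Q > Z$. Applying the tensor-product compatibility and hypothesis (i), one finds $(M \otimes_{kG'} M^{*})(\Delta Q) \cong B_{0}(kC_{G}(Q))$, which by Brauer's third main theorem matches $B_{0}(kG)(\Delta Q)$. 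Hence $X(\Delta Q) = 0$, contradicting the assumption that $\Delta Q$ is a vertex of $Y$. Therefore $Q = Z$ and $X$ is $\Delta Z$-projective, so $(M, M^{*})$ induces a relative $Z$-stable equivalence of Morita type.

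The main obstacle is making the tensor-product compatibility of the Brauer construction rigorous in the bimodule setting and confirming that $M(\Delta Q)$ is a $p$-permutation $k[C_{G}(Q) \times C_{G'}(Q)]$-module lying in $B_{0}(kC_{G}(Q)) \otimes_{k} B_{0}(kC_{G'}(Q))^{\mathrm{op}}$; both facts rely delicately on the Scott structure of $M$ and on Brauer's theorems. A secondary subtlety is the vertex analysis of summands of $M \otimes_{kG'} M^{*}$, which uses the coincidence of the fusion systems of $G$ and $G'$ on $P$ to ensure that diagonal vertices take the form $\Delta Q$ up to $(G \times G)$-conjugation. The centrality of $Z$ is essential throughout, for it guarantees that $\Delta Z$ has no proper conjugates and acts trivially on all the principal-block modules involved, which is precisely what turns Brou\'e's classical argument into a statement relative to $\Delta Z$.
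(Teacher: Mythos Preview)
Your proposal is correct and follows essentially the same route as the paper: both directions rest on the decomposition $M\otimes_{B'}M^{*}\cong B_{0}(G)\oplus X$ (established in the paper as \cref{block-direct-summand}), the compatibility $(M\otimes_{B'}M^{*})(\Delta Q)\cong M(\Delta Q)\otimes M(\Delta Q)^{*}$, the identification $B_{0}(G)(\Delta Q)\cong B_{0}(C_{G}(Q))$, and vertex bookkeeping via \cref{Brauer-construction-vertex}. One small slip: in (ii)$\Rightarrow$(i) you invoke ``$\Delta Z$-projectivity of $X$'', but hypothesis~(ii) only gives $Z\times Z$-projectivity; since $Z\times Z$ is also central in $G\times G$, the same reasoning (that $\Delta Q$ with $Q>Z$ cannot lie in a conjugate of $Z\times Z$) still yields $X(\Delta Q)=0$, so the argument survives unchanged.
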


    Then we generalize Linckelmann's result \cite[Theorem 2.1]{Linckelmann-1996} for
    a relative $Z$-stable equivalence of Morita type:
    \begin{theorem}\label{generalization-Linckelmann}
        Let $k$ be an algebraically closed field of characteristic $p>0$.
        Let $G$ and $G'$ be finite groups,
        and $B$ and $B'$ blocks of $kG$ and $kG'$, respectively,
        with a common nontrivial defect group $P$
        such that $G$ and $G'$ have the same fusion system on $P$.
        Let $M$ be a $B$-$B'$-bimodule that is a
        $\Delta P$-projective $p$-permutation $k[G\times G']$-module.
        Assume that for a subgroup $Q$ of $P$,
        the pair $(M,M^{*})$ induces
        a relative $Q$-stable equivalence of Morita type
        between $B$ and $B'$.
        Then the following hold:
        \begin{enumerate}[label={$(\mathrm{\roman*})$}]
            \item Up to isomorphism,
            $M$ has a unique indecomposable summand
            that is non $Q\times Q$-projective,
            considered as a $k[G\times G']$-module.
        \end{enumerate}

        Moreover, assume that $Z$ is a proper subgroup of $P$
        such that $Z$ is central in $G$ and $G'$.
        If $Q=Z$, then the following hold:
        \begin{enumerate}[label={$(\mathrm{\roman*})$}]
            \setcounter{enumi}{1}
            \item If $M$ is a trivial source module with vertex $\Delta P$,
                then for any simple $B$-module $S$,
                the $B'$-module $S\otimes_{B}M$ is indecomposable,
                and non $Z$-projective, considered as a $kG'$-module.
            
            \item The pair $(M,M^{*})$ induces a Morita equivalence
            between $B$ and $B'$ if and only if
            for any simple $B$-module $S$,
            the $B'$-module $S\otimes_{B}M$ is simple.
        \end{enumerate}
        
    \end{theorem}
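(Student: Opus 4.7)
The plan is to adapt Linckelmann's original argument \cite{Linckelmann-1996} to the relative setting, systematically replacing ``projective bimodule'' by ``$\Delta Q$-projective bimodule'' and ``projective $k[G\times G']$-module'' by ``$Q\times Q$-projective $k[G\times G']$-module''. The hypothesis of a relative $Q$-stable equivalence of Morita type yields bimodule isomorphisms $M\otimes_{B'}M^{*}\cong B\oplus X$ and $M^{*}\otimes_{B}M\cong B'\oplus X'$ with $X$, $X'$ being $\Delta Q$-projective; since $\Delta Q\leq Q\times Q$, these bimodules are in particular $Q\times Q$-projective as $k[G\times G]$- and $k[G'\times G']$-modules.

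For part (i), decompose $M=\bigoplus_{i}M_{i}$ into indecomposable $k[G\times G']$-modules. The key observation is that if $M_{i}$ is $Q\times Q$-projective, then the functor $-\otimes_{B}M_{i}$ sends every $B$-module to a $Q$-projective $kG'$-module, since $N\otimes_{B}\mathrm{Ind}_{Q\times Q}^{G\times G'}(L)$ is $Q$-projective for every $kQ$-module $L$. Existence of a non $Q\times Q$-projective summand then follows from the fact that $-\otimes_{B}M$ induces an equivalence between the stable categories of $B$- and $B'$-modules modulo $Q$-projectives, which is in particular non-zero. Uniqueness follows from an endomorphism-ring analysis as in Linckelmann: via adjunction, $\mathrm{End}_{k[G\times G']}(M)\cong Z(B)\oplus\mathrm{Hom}_{B\otimes B^{\mathrm{op}}}(B,X)$, and the second summand, together with the contributions from $Q\times Q$-projective summands of $M$, lies in the ideal of maps factoring through $Q\times Q$-projective modules. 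Since $Z(B)$ is local, modulo this ideal the decomposition of $M$ admits exactly one indecomposable summand contributing a nonzero local factor, giving the unique non $Q\times Q$-projective summand.

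For parts (ii) and (iii), now assume $Q=Z$ is central in $G$ and $G'$. Centrality implies every simple $B$-module $S$ is inflated from a simple module of the corresponding block $\bar{B}$ of $k[G/Z]$ of defect $P/Z\neq 1$, so $S$ is non $Z$-projective. For (ii), assume $M$ is an indecomposable trivial source module with vertex $\Delta P$, so by (i) $M$ is itself the unique non $Z\times Z$-projective summand. The equivalence on stable categories modulo $Z$-projectives sends the nonzero class of $S$ to the nonzero class of $S\otimes_{B}M$, proving $S\otimes_{B}M$ is non $Z$-projective as a $kG'$-module. For indecomposability, compute the Brauer construction $(S\otimes_{B}M)(\Delta P)$ in terms of $S$ and $M(\Delta P)$, exploiting that the centrality of $Z$ makes the Brauer functor $-(\Delta P)$ compatible with inflation along $Z$; the result forces $S\otimes_{B}M$ to have exactly one indecomposable component with vertex $\Delta P$, and the absence of any $Z\times Z$-projective (hence $Z$-projective) complementary summand, by non $Z$-projectivity, yields indecomposability.

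For (iii), the forward direction is standard. For the converse, (ii) ensures that $-\otimes_{B}M$ maps simple $B$-modules to non $Z$-projective indecomposable $B'$-modules, which are simple by hypothesis; the inverse functor $-\otimes_{B'}M^{*}$ then provides a bijection with simple $B'$-modules. Writing $M=N\oplus R$ with $R$ the $Z\times Z$-projective complement, $-\otimes_{B}N$ gives an equivalence of stable categories modulo $Z$-projectives preserving simples, and a standard Loewy-length induction upgrades this to a Morita equivalence $B\simeq B'$, hence also for $(M,M^{*})$. The main obstacle lies in the uniqueness step of (i) and the Brauer-construction computation in (ii), both of which rely on reconciling $\Delta Q$-projectivity of bimodules with $Q\times Q$-projectivity of $k[G\times G']$-modules; the centrality of $Z$ in (ii)-(iii) is what makes this reconciliation concrete, by identifying both notions with $Z$-projectivity on each one-sided module category.
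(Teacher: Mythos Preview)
Your approach to part (i) via the endomorphism ring is plausible but more delicate than you indicate; the paper instead argues directly: if $M=M_{1}\oplus M_{2}$ and $B$ sits in $M_{1}\otimes_{B'}M^{*}$, then $M_{2}\otimes_{B'}M^{*}$ is $Q\times Q$-projective, hence (by \cref{basic-properties-Delta-projective-p-perm-tensor-functor}) so is $M_{2}\otimes_{B'}M^{*}\otimes_{B}M\cong M_{2}\oplus(M_{2}\otimes_{B'}Y)$, forcing $M_{2}$ to be $Q\times Q$-projective. Note also a small slip: the definition gives $X$ as $Q\times Q$-projective, not $\Delta Q$-projective; your implication goes the wrong way, though you land on the right hypothesis.

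The real gap is in part (ii). You correctly observe that the relative stable equivalence forces $S\otimes_{B}M$ to have a unique non $Z$-projective indecomposable summand. The problem is ruling out additional $Z$-projective summands. Your proposed argument is confused on two counts. First, $S\otimes_{B}M$ is a $kG'$-module, so ``$(S\otimes_{B}M)(\Delta P)$'' and ``vertex $\Delta P$'' and ``$Z\times Z$-projective summand'' do not make sense for it. Second, and more seriously, the sentence ``the absence of any \ldots\ $Z$-projective complementary summand, by non $Z$-projectivity, yields indecomposability'' is a non sequitur: knowing that $S\otimes_{B}M$ is non $Z$-projective only says \emph{some} summand is non $Z$-projective, not that \emph{no} summand is $Z$-projective. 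This is exactly the hard step, and the paper handles it with a separate technical result (\cref{generalization-Linckelmann-proposition}): using that $Z$ is central, one shows $\mathrm{soc}(kG)\otimes_{kG}M$ is annihilated by the preimage of $\mathrm{soc}(k[G'/Z])$, by working in the algebra $kG^{\mathrm{op}}\otimes_{kZ}kG'\cong k[(G\times G')/\Delta Z]$ and exploiting that $M$ has nontrivial vertex $\Delta P/\Delta Z$ there. This genuinely replaces Linckelmann's original socle argument and is not recoverable from a Brauer-construction computation of the kind you sketch.

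For part (iii), your Loewy-length induction could perhaps be made to work, but the paper's route is shorter and uses (ii) again: if every $S\otimes_{B}M$ is simple, then applying (ii) to $M^{*}$ shows $S\otimes_{B}M\otimes_{B'}M^{*}\cong S\oplus(S\otimes_{B}X)$ is indecomposable, so $S\otimes_{B}X=0$ for all simple $S$; since $X$ is projective as a one-sided $B$-module this forces $X=0$, and then $(M,M^{*})$ is a Morita equivalence outright. Once you repair (ii), this direct argument is available and avoids any induction.
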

    
    We also introduce the notion of relative Brauer indecomposability
    (see \cref{def-relative-brauer-indecomposable})
    and give an equivalent condition
    for Scott modules to be relatively Brauer indecomposable.
    The notion of Brauer indecomposability was introduced in
    \cite{Kessar-Kunugi-Mitsuhashi-2011}.
    The Brauer indecomposability of Scott modules
    plays an important role in Brou\'{e}'s method.
    Ishioka and the first author gave an equivalent condition
    for Scott modules to be Brauer indecomposable
    (see \cite[Theorem 1.3]{Ishioka-Kunugi-2017}).
    Although Brauer indecomposability of Scott modules is also useful
    for \cref{generalization-Broue-gluing},
    somewhat more general condition is more appropriate.
    Therefore we introduce the notion of relative Brauer indecomposability
    and generalize the result \cite[Theorem 1.3]{Ishioka-Kunugi-2017}
    to this notion:

    \begin{theorem}\label{generalization-Ishioka-Kunugi}
        Let $G$ be a finite group, $P$ a $p$-subgroup of $G$, and $M=S(G,P)$.
        Suppose that the fusion system $\mathcal{F}_{P}(G)$ is saturated,
        and $R$ is a subgroup of $P$.
        Then the following are equivalent:
        \begin{enumerate}[label={$(\mathrm{\roman*})$}]
            \item The module $M$ is relatively $R$-Brauer indecomposable.
            \item The module 
            $S(N_{G}(Q),N_{P}(Q)){\downarrow}_{QC_{G}(Q)}^{N_{G}(Q)}$
            is indecomposable for each fully normalized subgroup $Q$
            of $P$ containing a $G$-conjugacy of $R$.
        \end{enumerate}
        Moreover, if these conditions hold,
        then $M(Q)\cong S(N_{G}(Q),N_{P}(Q))$
        for any fully normalized subgroup $Q$ of $P$
        containing a $G$-conjugacy of $R$.
    \end{theorem}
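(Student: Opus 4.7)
The strategy is to follow the template of \cite[Theorem 1.3]{Ishioka-Kunugi-2017}, adapted so that the analysis is restricted throughout to fully normalized subgroups $Q$ of $P$ containing a $G$-conjugate of $R$. The crucial structural input is a description of $M(Q) = S(G,P)(Q)$ for such $Q$: since $M$ is a $p$-permutation $kG$-module with vertex $P$ and $\mathcal{F}_{P}(G)$ is saturated, Brou\'{e}'s theory of $p$-permutation modules combined with the standard behaviour of Scott modules under the Brauer construction gives that $M(Q)$ is a $p$-permutation $kN_{G}(Q)$-module containing $S(N_{G}(Q), N_{P}(Q))$ as a direct summand of multiplicity one, and that every complementary indecomposable summand has vertex properly contained in $N_{P}(Q)$. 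I would isolate this as a preparatory lemma.

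For $(\mathrm{i}) \Rightarrow (\mathrm{ii})$ together with the moreover clause, fix a fully normalized $Q \leq P$ containing a $G$-conjugate of $R$ and write
\[
M(Q) = S(N_{G}(Q), N_{P}(Q)) \oplus X
\]
as in the preparatory lemma. Restriction along $QC_{G}(Q) \leq N_{G}(Q)$ exhibits $S(N_{G}(Q),N_{P}(Q)){\downarrow}_{QC_{G}(Q)}^{N_{G}(Q)}$ as a nonzero direct summand of $M(Q){\downarrow}_{QC_{G}(Q)}^{N_{G}(Q)}$. Relative $R$-Brauer indecomposability (\cref{def-relative-brauer-indecomposable}) forces the latter to be indecomposable, so $X = 0$ and the restriction of the Scott summand is itself indecomposable; this simultaneously yields (ii) and the identification $M(Q) \cong S(N_{G}(Q), N_{P}(Q))$ of the moreover clause.

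For the converse $(\mathrm{ii}) \Rightarrow (\mathrm{i})$, I again write $M(Q) = S(N_{G}(Q), N_{P}(Q)) \oplus X$ with $X$ supported on vertices properly below $N_{P}(Q)$, and aim to show $X = 0$; once this is known, the required indecomposability of $M(Q){\downarrow}_{QC_{G}(Q)}^{N_{G}(Q)}$ is precisely the content of hypothesis (ii). I would argue by downward induction on $|Q|$ among fully normalized subgroups containing a $G$-conjugate of $R$, with the base case $Q = P$ handled by the standard identification $M(P) \cong S(N_{G}(P), P)$. For the inductive step, a hypothetical nonzero indecomposable summand $Y$ of $X$ has vertex $V \lneq N_{P}(Q)$, so a suitable iterated Brauer construction at a $p$-subgroup $V'$ strictly between $Q$ and $N_{P}(Q)$ survives in $Y$ and hence in $M(Q)$; passing to a fully normalized $\mathcal{F}_{P}(G)$-conjugate $Q'$ of $\langle Q, V' \rangle$, which still contains a $G$-conjugate of $R$ since $Q$ does, produces an extra summand of $M(Q')$ contradicting the inductively available equality $M(Q') \cong S(N_{G}(Q'), N_{P}(Q'))$.

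The main obstacle lies in this last step. Beyond the standard Green-correspondence and Brauer-construction bookkeeping already present in \cite[Theorem 1.3]{Ishioka-Kunugi-2017}, the relative version demands a careful verification that every subgroup entering the inductive chain continues to contain a $G$-conjugate of $R$, so that hypothesis (ii) remains applicable at each stage. The precise invocation of the saturation axioms, guaranteeing that $\mathcal{F}_{P}(G)$-isomorphisms out of fully normalized subgroups extend to conjugations by elements of sufficiently large overgroups, is what makes the replacement of $\langle Q, V'\rangle$ by a fully normalized $\mathcal{F}_{P}(G)$-conjugate $Q'$ legitimate while preserving the condition of containing a $G$-conjugate of $R$; this is the genuinely new technical ingredient over the non-relative setting.
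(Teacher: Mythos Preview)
Your overall plan matches the paper's: decompose $M(Q)$ with $S(N_G(Q),N_P(Q))$ as a summand, then kill a hypothetical extra summand by taking a further Brauer construction and invoking the inductive hypothesis at a strictly larger subgroup. Two points, however, are off.

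First, the vertex bound in your preparatory lemma is the wrong one. What is actually true and actually needed (this is \cref{on-vertices-of-brauer-construction} in the paper) is that every indecomposable summand of $M(Q)$ has a vertex $Q'$ with $Q \vartriangleleft Q' \leq_{N_G(Q)} N_P(Q)$; the crucial inequality is $Q \lneq Q'$, which makes $|P:Q'|<|P:Q|$ and feeds the induction. Your assertion that the complementary summands have vertex \emph{properly contained in} $N_P(Q)$ (together with the multiplicity-one claim) is neither proved in the paper nor required, and it does not by itself give you a $V'$ strictly above $Q$ at which $Y(V')\neq 0$. As written, the step ``$V\lneq N_P(Q)$, so a suitable $V'$ strictly between $Q$ and $N_P(Q)$ survives in $Y$'' is a non sequitur; once you replace it by $Q\lneq V$ the argument goes through with $V'=V$.

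Second, you make the induction harder than it needs to be. The paper inducts over \emph{all} $Q\leq P$ with $R\leq_G Q$ (using \cref{relative-Brauer-construction-indecomposable-characterization}) and only replaces $Q$ by a fully normalized conjugate inside each step in order to invoke \cref{Scott-of-normalizer-is-direct-summand-of-Brauer-construction} and \cref{on-vertices-of-brauer-construction}. With that set-up, the subgroup $Q'$ produced in the inductive step is simply conjugated into $P$ by some $u\in N_G(Q)$, and since $R\leq_G Q=Q^u\leq Q'^u$ the condition ``contains a $G$-conjugate of $R$'' is automatic. So what you flag as the ``genuinely new technical ingredient'' --- using saturation to pass to a fully normalized conjugate while preserving the $R$-condition --- does not arise in the paper at all; the $R$-condition is conjugation-invariant for free, and no delicate extension of $\mathcal{F}$-isomorphisms is needed.
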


    This paper is organized as follows:
    in Section 2, we establish some notation and facts
    used throughout the paper.
    We also recall the definitions and some facts on fusion systems.
    In Section 3, we recall the definition of
    a relative stable equivalence of Morita type,
    and investigate its properties under suitable hypotheses.
    In Section 4, we prove 
    \cref{generalization-Broue-gluing} and \cref{generalization-Linckelmann}.
    In Section 5, we define relative Brauer indecomposability,
    and prove \cref{generalization-Ishioka-Kunugi}.
    In Section 6,
    we give an example of constructing a Morita equivalence
    between the principal blocks of $SL_{2}(3)$ and $SL_{2}(11)$
    in characteristic $2$.

    \section{Notation and preliminaries}
    Throughout this paper, we assume that
    $k$ is an algebraically closed field of characteristic $p>0$,
    $G$ is a finite group,
    and modules are finitely generated right modules, unless otherwise stated.
    
    % Groups
    We write $H\leq G$ if $H$ is a subgroup of $G$,
    and write $H\unlhd G$ if $H$ is a normal subgroup of $G$.
    For subgroups $H$ and $K$ of $G$,
    we write $H\leq_{G}K$
    if $H$ is conjugate in $G$ with a subgroup of $K$.
    In particular, if $H$ is a proper subgroup of $G$,
    then we write $H<G$ for $H\leq G$,
    and $H\vartriangleleft G$ for $H\unlhd G$.
    We also write $H<_{G}K$ if $H$ is conjugate in $G$
    with a proper subgroup of $K$.
    We set $H^{g}=g^{-1}Hg$,
    and write $[H\backslash G]$ for a set of representatives
    of the right cosets of $H$ in $G$.
    We also write $[H\backslash G/K]$ for a set of representatives
    of the double cosets of $H$ and $K$ in $G$.
    We write $Z(G)$ for the center of $G$.
    We write $\Delta G=\{(g,g)\mid g\in G\}\leq G\times G$.
    
    % Modules
    Let $H$ be a subgroup of $G$.
    For a $kG$-module $M$,
    we write $M{\downarrow}_{H}^{G}$ for the restriction of $M$ to $H$.
    For a $kH$-module $N$, we write $N{\uparrow}_{H}^{G}$
    for the induced $kG$-module of $N$.
    We write $k_{G}$ for the trivial $kG$-module,
    and $B_{0}(G)$ for the principal block of $kG$.
    We write $J(kG)$ for the Jacobson radical of $kG$.
    For modules $U$ and $V$,
    we write $U\otimes V$ for $U\otimes_{k}V$,
    and $V^{*}=\mathrm{Hom}_{k}(V,k)$ for the $k$-dual of $V$.
    If $U$ is a left module, and $V$ is a right module,
    then we consider $U\otimes V$ as a bimodule,
    and $V^{*}$ as a left module,
    unless otherwise stated.

    For a subgroup $H$ of $G$,
    there is a unique indecomposable summand of $k_{H}{{\uparrow}^{G}}$
    having $k_{G}$ as a direct summand of the top.
    This indecomposable summand is called the Scott module with respect to $H$,
    and denoted by $S(G,H)$.
    We use the fact that if $H$ and $H'$ are subgroups of $G$,
    and $Q$ and $Q'$ are Sylow $p$-subgroups of $H$ and $H'$, respectively,
    then $S(G,H)\cong S(G,H')$ if and only if $Q$ and $Q'$ are conjugate in $G$.
    (see \cite[Chapter 4, Corollary 8.5]{Nagao-Tsushima-book-1989}).
    In particular, it follows that $S(G,H)\cong S(G,Q)$,
    and $S(G,Q)$ has $Q$ as a vertex.
    Therefore we refer to $S(G,Q)$ as the Scott module with vertex $Q$.
    A $kG$-module is called a $p$-permutation module if
    it is a direct summand of $\bigoplus_{i=1}^{r}k_{H_i}{\uparrow}^{G}$
    for some subgroups $H_{i}$ of $G$.
    An indecomposable $p$-permutation module is called a trivial source module.
    The Scott module $S(G,H)$ is a trivial source module.
    
    We recall the definition of the Brauer construction
    and its basic facts.
    For a $kG$-module $M$ and a $p$-subgroup $Q$ of $G$,
    the Brauer construction $M(Q)$ of $M$ with respect to $Q$
    is the $kN_{G}(Q)$-module defined as follows:
    \[
        M(Q)=M^{Q}/\sum_{R<Q}\mathrm{tr}_{R}^{Q}(M^{R}),
    \]
    where $M^{Q}$ is the set of fixed points of $Q$ in $M$,
    and $\mathrm{tr}_{R}^{Q}:M^{R}\rightarrow M^{Q}$
    is a linear map given by
    $\mathrm{tr}_{R}^{Q}(m)=\sum_{t\in [R\backslash Q]}mt$.

    \begin{lemma}\textnormal{(see \cite[(1.3)]{broue-1985},
        \cite[{Corollary} 27.7]{Thevenaz-book-1995},
        and also \cite[\text{Proposition }5.10.3]{Linckelmann-book-2018-vol1})}
        \label{Brauer-construction-vertex}
        Let $M$ be an indecomposable $kG$-module and $Q$ a $p$-subgroup of $G$.
        Then the following hold:
        \begin{enumerate}[label={$(\mathrm{\roman*})$}]
            \item If $M(Q)\neq 0$, then $Q$ is contained in a vertex of $M$.
            \item In particular, if $M$ is a trivial source module,
            then $M(Q)\neq 0$ if and only if $Q$ is contained in a vertex of $M$.
        \end{enumerate}
    \end{lemma}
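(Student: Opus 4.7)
The plan is to prove (i) by combining relative projectivity with the Mackey-type decomposition of the Brauer construction of an induced module. Since $V$ is a vertex of the indecomposable $M$, there exists a $kV$-module $N$ such that $M$ is a direct summand of $N{\uparrow}_V^G$, and by additivity of the Brauer functor $M(Q)$ is a direct summand of $(N{\uparrow}_V^G)(Q)$. The standard formula (see, e.g., \cite[Proposition 5.10.3]{Linckelmann-book-2018-vol1}) expresses $(N{\uparrow}_V^G)(Q)$ as a direct sum indexed by double coset representatives $g\in[Q\backslash G/V]$ satisfying $Q^g\leq V$, so if $M(Q)\neq 0$ then at least one such $g$ must exist, giving $Q\leq_G V$.

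The forward direction of (ii) is immediate from (i). For the converse, assume $M$ is a trivial source module with vertex $V$ and, after replacing $Q$ by a suitable $G$-conjugate, $Q\leq V$. Here I would first invoke Brou\'e's theorem for trivial source modules: the functor $(-)(V)$ realizes a bijection between isomorphism classes of indecomposable trivial source $kG$-modules with vertex $V$ and isomorphism classes of indecomposable projective $k[N_G(V)/V]$-modules, so in particular $M(V)\neq 0$. Since $V$ is a $p$-group containing $Q$, iterating $Q_{i+1}=N_V(Q_i)$ from $Q_0=Q$ yields a subnormal chain $Q=Q_0\trianglelefteq Q_1\trianglelefteq\cdots\trianglelefteq Q_n=V$. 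For each normal inclusion $Q_i\trianglelefteq Q_{i+1}$, the natural identification $M(Q_{i+1})\cong\bigl(M(Q_i)\bigr)(Q_{i+1}/Q_i)$ shows that $M(Q_i)=0$ would force $M(Q_{i+1})=0$; descending from $M(V)\neq 0$ step by step therefore produces $M(Q)\neq 0$.

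The step I expect to be most delicate is the precise form of the Mackey decomposition used in (i), together with the iteration identity used in the second half: both are standard but notationally fiddly, with the centralizers, double cosets, and $N_G(Q)$-versus-$N_G(R)$ compatibilities requiring careful bookkeeping, and the three cited sources (Brou\'e, Th\'evenaz, Linckelmann) present them in slightly different conventions. No essentially new ingredient is needed, so in practice the plan reduces to adopting the most convenient reference and reading off the two claims.
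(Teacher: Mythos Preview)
The paper does not supply its own proof of this lemma; it is quoted directly from the cited references \cite{broue-1985}, \cite{Thevenaz-book-1995}, and \cite{Linckelmann-book-2018-vol1}, so there is nothing to compare against beyond those sources. Your argument is a correct reconstruction of the standard proof found there: part~(i) via Mackey and the vanishing of the Brauer quotient of a module induced from a proper subgroup, and part~(ii) via Brou\'e's parametrisation together with transitivity of the Brauer construction along a subnormal chain.

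Two small remarks on the points you flagged as delicate. For~(i), the full Mackey-type decomposition of $(N{\uparrow}_V^G)(Q)$ is usually stated for $p$-permutation modules; for an arbitrary source $N$ it is cleaner to note that whether $M(Q)$ vanishes depends only on $M{\downarrow}_Q$, apply the ordinary Mackey formula to $(N{\uparrow}_V^G){\downarrow}_Q$, and use that $(K{\uparrow}_R^Q)(Q)=0$ whenever $R<Q$ because every $Q$-fixed element of $K{\uparrow}_R^Q$ is already a trace from $R$. For~(ii), the transitivity identity $M(Q_{i+1})\cong\bigl(M(Q_i)\bigr)(Q_{i+1}/Q_i)$ is unproblematic here precisely because $M$ is a $p$-permutation module (one can compute both sides on a $P$-stable basis), which is all you need.
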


    \begin{lemma}\label{Brauer-construction-trivial}
        Let $Z$ be a $p$-subgroup of $Z(G)$.
        If $M$ is a trivial source $kG$-module
        with vertex containing $Z$, then $M(Z)=M$.
    \end{lemma}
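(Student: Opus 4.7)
The plan is to reduce the statement to two simple observations: that $Z$ acts trivially on $M$, and that the relative traces coming from proper subgroups of $Z$ vanish.

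First I would argue that $Z$ acts trivially on $M$. Since $M$ is a trivial source module with a vertex $V$ containing $Z$, $M$ is (isomorphic to) a direct summand of the permutation module $k_V\uparrow^{G}$, which we can identify with $k[V\backslash G]$, where $G$ acts by right multiplication on the right cosets. For any $z\in Z$ and any coset $Vg$, centrality of $Z$ gives $Vgz = Vzg = Vg$, since $z\in Z\leq V$. Thus $Z$ acts trivially on $k[V\backslash G]$, hence on its summand $M$. In particular $M^{R}=M$ for every subgroup $R\leq Z$; in particular $M^{Z}=M$.

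Next I would evaluate the relative trace maps appearing in the denominator of $M(Z)$. For any proper subgroup $R<Z$ and any $m\in M^{R}=M$,
\[
\mathrm{tr}_{R}^{Z}(m)=\sum_{t\in [R\backslash Z]}mt=\sum_{t\in [R\backslash Z]}m=[Z:R]\,m.
\]
Because $Z$ is a $p$-group and $R$ is a proper subgroup, $[Z:R]$ is a positive power of $p$, which vanishes in $k$. Hence $\mathrm{tr}_{R}^{Z}(m)=0$ for every $R<Z$ and every $m\in M^{R}$, so $\sum_{R<Z}\mathrm{tr}_{R}^{Z}(M^{R})=0$. Combining the two steps yields
\[
M(Z)=M^{Z}\Big/\sum_{R<Z}\mathrm{tr}_{R}^{Z}(M^{R})=M/0=M.
\]

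The argument is essentially routine once one recognizes that centrality of $Z$ forces $Z$ to fix every coset $Vg$; the only point to be careful about is the case $Z=1$, which is trivially handled since then there are no proper subgroups $R<Z$ and $M(1)=M^{1}=M$ by definition. Thus I do not anticipate a real obstacle; the main content is the identification of $M$ as a summand of $k_{V}\uparrow^{G}$ and the centrality computation $zg\in gV$.
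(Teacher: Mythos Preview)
Your proof is correct and follows essentially the same approach as the paper's: both show that $Z$ acts trivially on $M$ by exhibiting $M$ as a summand of a permutation module $k_H{\uparrow}^G$ with $Z\leq H$, and then observe that the relative traces $\mathrm{tr}_R^Z$ vanish because $[Z:R]$ is a multiple of $p$. The only cosmetic difference is that you take $H$ to be the vertex $V$ itself and spell out the coset computation explicitly, whereas the paper simply asserts that $Z$ acts trivially on $k_H{\uparrow}^G$.
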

    \begin{proof}
        If $Z=1$, then the assertion clearly holds
        by the definition of the Brauer construction,
        and hence we assume that $Z\neq 1$.
        Since $M$ is a trivial source module with vertex containing $Z$,
        it is a direct summand of $k_{H}{\uparrow}^{G}$
        for some subgroup $H$ of $G$ containing $Z$.
        Since $Z\leq Z(G)$,
        it follows that $Z$ acts trivially on $k_{H}{\uparrow}^{G}$,
        and in particular acts trivially on $M$.
        Hence for any subgroup $R$ of $Z$, we have that $M^{R}=M$.
        For any proper subgroup $R$ of $Z$
        and $m\in M=M^{R}$, we have that
        $\textrm{tr}_{R}^{Z}(m)=\sum_{t\in [R\backslash Z]}mt=|Z:R|m=0$,
        and the result follows.
    \end{proof}
    
    % Fusion system
    We recall some definitions of fusion systems.
    However, it may suffice to know the facts
    in \cref{important-properties-fusion-system} below.
    For subgroups $H$ and $K$ of $G$,
    we write
    \[
        \mathrm{Hom}_{G}(H,K)=\{\varphi\in\mathrm{Hom}(H,K)\mid\varphi=c_{g}
        \text{ for some $g\in G$ such that $H^{g}\leq K$}\},
    \]
    where $c_g$ is a conjugation map.
    Let $P$ be a $p$-subgroup of $G$.
    The \textit{fusion system} of $G$ over $P$
    is the category $\mathcal{F}_{P}(G)$
    whose objects are the subgroups of $P$
    and morphisms are given by
    $\mathrm{Hom}_{\mathcal{F}_{P}(G)}(Q,R)=\mathrm{Hom}_{G}(Q,R)$.
    For subgroups $Q$ and $R$ of $P$,
    we say that $Q$ and $R$ are \textit{$\mathcal{F}_{P}(G)$-conjugate}
    if $Q$ and $R$ are isomorphic in $\mathcal{F}_{P}(G)$.
    Let $Q$ be a subgroup of $P$.
    We say that $Q$ is \textit{fully automized} in $\mathcal{F}_{P}(G)$
    if $\mathrm{Aut}_{P}(Q)$ is a Sylow $p$-subgroup of
    $\mathrm{Aut}_{\mathcal{F}_{P}(G)}(Q)$.
    We say that $Q$ is \textit{receptive} in $\mathcal{F}_{P}(G)$
    if for any subgroup $R$ of $P$ and
    any $\varphi \in \mathrm{Iso}_{\mathcal{F}_{P}(G)}(R,Q)$,
    there is an element $\bar{\varphi}
    \in \mathrm{Hom}_{\mathcal{F}_{P}(G)}(N_{\varphi},P)$
    such that $\bar{\varphi}|_{Q}=\varphi$,
    where $N_{\varphi}=\{g\in N_{P}(R)\mid {c_{g}}^{{\varphi}^{-1}}
    \in \textrm{Aut}_{P}(Q)\}$.
    We say that $Q$ is \textit{fully normalized} in $\mathcal{F}_{P}(G)$
    if $|N_{P}(Q)|\geq |N_{P}(R)|$ for any subgroup $R$ of $P$ that is
    $\mathcal{F}_{P}(G)$-conjugate to $Q$.
    The fusion system $\mathcal{F}_{P}(G)$ is \textit{saturated}
    if any subgroup of $P$ is $\mathcal{F}_{P}(G)$-conjugate
    to a subgroup that is fully automized and receptive.
    In this paper, we use the following facts:
    \begin{remark}
        \label{important-properties-fusion-system}
        \begin{enumerate}[label={$(\mathrm{\roman*})$}]
            \item By the definition, 
            we can take fully normalized subgroups
            as representatives of $\mathcal{F}_{P}(G)$-conjugacy classes
            of subgroups of $P$.
            
            \item If $\mathcal{F}_{P}(G)$ is saturated,
            then any fully normalized subgroup is
            fully automized and receptive in $\mathcal{F}_{P}(G)$
            $($see \textnormal{\cite[\textit{Theorem }5.2]{Roberts-Shpectorov-2009}}$)$.

            \item If $P$ is a Sylow $p$-subgroup of $G$,
            then $\mathcal{F}_{P}(G)$ is saturated
            $($see \textnormal{\cite[\textit{Proposition }1.3]{Broto-Levi-Oliver-2003}}$)$.
        \end{enumerate}        
    \end{remark}

    \section{Relative Stable Equivalences of Morita Type}
    \label{sec-relative-stm}

    The notion of projectivity relative to a $kG$-module $W$
    was first introduced by Okuyama \cite{Okuyama-preprint}
    (see also \cite[Section 8]{Carlson-Book-1996}).
    In \cite{Carlson-Peng-Wheeler-1998},
    a $W$-stable category $\underline{\textrm{mod}}^{W}(kG)$,
    which is an analogue of the stable category
    $\underline{\textrm{mod}}(kG)$,
    was defined, and it was shown that $\underline{\textrm{mod}}^{W}(kG)$
    is a triangulated category.
    In \cite{Wang-Zhang-2018},
    it was shown that
    for a block $B$ of $kG$,
    the full subcategory $\underline{\textrm{mod}}^{W}(B)$
    of $\underline{\textrm{mod}}^{W}(kG)$
    whose objects are all finitely generated $B$-modules
    is a triangulated subcategory.
    Wang and Zhang \cite{Wang-Zhang-2018} also introduced
    the notion of a relative $(W,W')$-stable equivalence of Morita type
    between blocks $B$ and $B'$ of finite groups $G$ and $G'$, respectively,
    where $W$ is a $kG$-module and $W'$ is a $kG'$-module.
    In this paper, we use the subgroup versions of these notions
    (see below \cref{def-Wang-Zhang}).
    The main purpose of this section is to prove
    \cref{relative-stable-induces-equiv-stable-categories},
    which shows that under suitable hypotheses,
    a relative $Q$-stable equivalence of Morita type
    between blocks $B$ and $B'$ with a common defect group $P$
    induces an equivalence between $\underline{\textrm{mod}}^{Q}(B)$
    and $\underline{\textrm{mod}}^{Q}(B')$
    as triangulated categories,
    where $Q$ is a subgroup of $P$.

    Let $W$ be a $kG$-module.
    We say that a $kG$-module $U$ is relatively $W$-projective
    if $U$ is a direct summand of $W\otimes V$ for some $kG$-module $V$,
    where $W\otimes V$ is considered as a $kG$-module via the diagonal action.
    We define the $W$-stable category 
    \underline{mod}$^{W}$($kG$) of mod($kG$)
    whose objects are the same as those of $\textrm{mod}(kG)$,
    and whose morphisms are given by
    \[
        \underline{\mathrm{Hom}}^{W}_{kG}(U,V)
        =
        \mathrm{Hom}_{kG}(U,V)/\mathrm{Hom}^{W}_{kG}(U,V),
    \]
    where $\mathrm{Hom}^{W}_{kG}(U,V)$ is the subspace of $\mathrm{Hom}_{kG}(U,V)$
    consisting of all homomorphisms each of which factors through
    a $W$-projective $kG$-module.
    For a block $B$ of $kG$,
    we write \underline{mod}$^{W}(B)$ for the full subcategory
    of \underline{mod}$^{W}(kG)$ whose objects
    are all finitely generated $B$-modules.
    We write $\underline{f}$ for the image of
    a homomorphism $f:U\rightarrow V$
    in $\underline{\mathrm{Hom}}^{W}_{kG}(U,V)$ and
    $\underline{\mathrm{Hom}}^{W}_{B}(U,V)$.
    % For a $kG$-homomorphism $f:U\rightarrow V$,
    % we write $\underline{f}$ for the image of $f$
    % in $\underline{\mathrm{Hom}}^{W}_{kG}(U,V)$.
    
    We recall from \cite{Carlson-Peng-Wheeler-1998}
    that $W$-stable categories are triangulated.
    We say that a short exact sequence of $kG$-modules
    \[
        E:
        \begin{tikzcd}
            0\ar[r]
            & U_{1}\ar[r,"f"]
            & U_{2}\ar[r,"g"]
            & U_{3}\ar[r]
            & 0
        \end{tikzcd}
    \]
    is $W$-split if $E\otimes W$ splits.
    Then $f$ is called a $W$-split monomorphism,
    and $g$ is called a $W$-split epimorphism.
    We write $\alpha_{W}:W^{*}\otimes W\rightarrow k$ for the evaluation map,
    that is, the homomorphism defined by $\alpha_{W}(f\otimes w)=f(w)$,
    where $W^{*}$ is considered as a right $kG$-module,
    and $W^{*}\otimes W$ is considered as a $kG$-module via the diagonal action.
    Then $\alpha_{W}$ is a $W$-split epimorphism
    (see \cite[Lemma 2.2]{Carlson-Peng-Wheeler-1998}),
    and hence its dual $\alpha^{*}_{W}:k\rightarrow W^{*}\otimes W$
    is a $W$-split monomorphism.
    For a $kG$-module $U$,
    we write $I_{W}(U)=U\otimes W^{*}\otimes W$,
    and write $\Omega_{W}^{-1}(U)$ for the cokernel of
    $\mathrm{id}_{U}\otimes\alpha^{*}_{W}:U\rightarrow I_{W}(U)$.
    For a $kG$-homomorphism $f_{1}:U_{1}\rightarrow U_{2}$,
    we have a commutative diagram of $W$-split short exact sequences
    \[
        \begin{tikzcd}
            0 \ar[r] & U_{1}\ar[r,"\mathrm{id}_{U_{1}}\otimes \alpha^{*}_{W}"]\ar[d,"f_{1}"'] & I_{W}(U_{1})\ar[r]\ar[d] & \Omega_{W}^{-1}(U_{1})\ar[r]\ar[d,equal] & 0\\
            0 \ar[r] & U_{2}\ar[r,"f_{2}"] & U_{3}\ar[r,"f_{3}"] & \Omega_{W}^{-1}(U_{1})\ar[r] & 0.
        \end{tikzcd}
    \]
    Then we get a triangle 
    \begin{tikzcd}
        U_{1}\ar[r,"\underline{f_{1}}"]
        & U_{2}\ar[r,"\underline{f_{2}}"]
        & U_{3}\ar[r,"\underline{f_{3}}"]
        & \Omega_{W}^{-1}(U_{1})
    \end{tikzcd}
    in $\mathrm{\underline{mod}}^{W}(kG)$,
    and it is called a standard triangle.
    We call a triangle in $\mathrm{\underline{mod}}^{W}(kG)$
    isomorphic to a standard triangle a distinguished triangle.
    Let $\mathscr{T}$ be the collection of distinguished triangles 
    in $\mathrm{\underline{mod}}^{W}(kG)$.
    Then it follows that \underline{mod}$^{W}$($kG$)
    with $\Omega_{W}^{-1}$ and $\mathscr{T}$
    is a triangulated category
    (see \cite[Theorem 6.2]{Carlson-Peng-Wheeler-1998}).
    For a block $B$ of $kG$,
    it holds that \underline{mod}$^{W}(B)$ is
    a triangulated subcategory of \underline{mod}$^{W}$($kG$)
    (see \cite[Proposition 3.1]{Wang-Zhang-2018}).
    
    Wang and Zhang \cite{Wang-Zhang-2018} introduced the notion of
    relative stable equivalences of Morita type:
    \begin{definition}\textnormal{(see \cite[Definition 5.1]{Wang-Zhang-2018})}
        \label{def-Wang-Zhang}
        Let $G$ and $G'$ be finite groups
        and $B$ and $B'$ blocks of $G$ and $G'$, respectively.
        Let $W$ be a $kG$-module and $W'$ a $kG'$-module.
        For a $B$-$B'$-bimodule $M$, and a $B'$-$B$-bimodule $N$,
        we say that the pair $(M,N)$ induces
        a relative $(W,W')$-stable equivalence of Morita type between $B$ and $B'$
        if $M$ and $N$ are finitely generated projective as left modules and right modules
        with the property that there are isomorphisms of bimodules 
        \[
            M\otimes_{B'} N\cong B\oplus X
            \text{\quad  and \quad}
            N\otimes_{B} M\cong B'\oplus Y,
        \]
        where $X$ is $W^{*}\otimes W$-projective 
        as a $k[G\times G]$-module
        and $Y$ is ${W'}^{*}\otimes W'$-projective
        as a $k[G'\times G']$-module.
    \end{definition}

    In this paper,
    we mainly consider subgroup versions of the notions above.
    Let $H$ be a subgroup of $G$.
    It follows from the Frobenius reciprocity
    that a $kG$-module $U$ is $H$-projective
    if and only if $U$ is $k_{H}{\uparrow}^{G}$-projective.
    Therefore projectivity relative to modules
    is a generalization of projectivity relative to subgroups.
    We write $\underline{\textrm{mod}}^{H}(kG)
    =\underline{\textrm{mod}}^{k_{H}{\uparrow}^{G}}(kG)$,
    and,
    for a block $B$ of $kG$, we write $\underline{\textrm{mod}}^{H}(B)
    =\underline{\textrm{mod}}^{k_{H}{\uparrow}^{G}}(B)$.
    We say that a short exact sequence of $kG$-modules is $H$-split
    if its restriction to $H$ splits.
    We see that a short exact sequence of $kG$-modules is
    $k_{H}\uparrow^{G}$-split if and only if it is $H$-split.
    In \cref{def-Wang-Zhang},
    suppose further that $B$ and $B'$ have a common defect group $P$.
    Then for a subgroup $Q$ of $P$,
    we say that $(M,N)$ induces a relative $Q$-stable equivalence of Morita type
    between $B$ and $B'$
    if $(M,N)$ induces a relative $(W,W')$-stable equivalence of Morita type
    with $W=k_{Q}\uparrow^{G}$ and $W'=k_{Q}\uparrow^{G'}$.
    With this definition, $X$ and $Y$ are $Q\times Q$-projective
    since it follows that
    \[
        {(k_{Q}\uparrow^{G})}^{*}\otimes k_{Q}\uparrow^{G}
        \cong kG\otimes_{kQ}k_{Q}\otimes k_{Q}\otimes_{kQ}kG
        \cong k_{Q\times Q}\uparrow^{G\times G}.
    \]

    Note that if $(M,N)$ induces
    a relative $(W,W')$-stable equivalence of Morita type
    between $B$ and $B'$,
    then $-\otimes_{B}M$ and $-\otimes_{B'}N$ do not, in general,
    induce an equivalence between $\underline{\textrm{mod}}^{W}(B)$
    and $\underline{\textrm{mod}}^{W'}(B')$.
    Indeed, suppose that $(M,N)$ induces a stable equivalence of Morita type
    between $B$ and $B'$.
    Then $X$ and $Y$ are $1$-projective,
    and hence $X$ is $Q\times Q$-projective for some nontrivial $p$-subgroup $Q$ of $G$.
    This means that $(M,N)$ induces
    a $(k_{Q}\uparrow^{G},k_{1}\uparrow^{G'})$-stable equivalence of Morita type.
    However, $-\otimes_{B}M$ sends indecomposable $B$-modules with vertex $Q$,
    which are zero objects in $\underline{\textrm{mod}}^{k_{Q}\uparrow^{G}}(B)$,
    to nonprojective $B'$-module.
    Hence $-\otimes_{B}M$ dose not induce an equivalence between
    $\underline{\textrm{mod}}^{k_{Q}\uparrow^{G}}(B)$
    and $\underline{\textrm{mod}}^{k_{1}\uparrow^{G'}}(B')
    =\underline{\textrm{mod}}(B')$.

    However, we can show that under suitable hypotheses,
    a relative $Q$-stable equivalence of Morita type
    between $B$ and $B'$ with a common defect group $P$
    induces an equivalence
    between $\underline{\textrm{mod}}^{Q}(B)$ and $\underline{\textrm{mod}}^{Q}(B')$
    as triangulated categories,
    where $Q$ is a subgroup of $P$.
    In order to show this,
    the following lemmas are needed.

    \begin{lemma}\textnormal{(see \cite[Lemma 9.4]{Okuyama-preprint})}
        \label{equiv-cond-relative-split}
        Let $W$ be a $kG$-module and
        \[
            E:\begin{tikzcd}
                0\arrow[r]
                & U_{1}\arrow[r,"f_{1}"]
                & U_{2}\arrow[r,"f_{2}"]
                & U_{3}\arrow[r]
                & 0
            \end{tikzcd}
        \]
        a short exact sequence of $kG$-modules.
        Then the following are equivalent:
        \begin{enumerate}[label={$(\mathrm{\roman*})$}]
            \item $E$ is $W$-split.
            \item For any $W$-projective $kG$-module $X$,
            the functor $\mathrm{Hom}_{kG}(X,-)$ is exact.

            \item For any $W$-projective $kG$-module $Y$,
            the functor $\mathrm{Hom}_{kG}(-,Y)$ is exact.
        \end{enumerate}
    \end{lemma}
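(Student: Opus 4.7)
The plan is to establish (i)~$\Leftrightarrow$~(ii); the equivalence (i)~$\Leftrightarrow$~(iii) will then follow by $k$-duality, applying the proven equivalence to the pair $(E^{*},W^{*})$ and using that $X$ is $W$-projective iff $X^{*}$ is $W^{*}$-projective, that $\mathrm{Hom}_{kG}(X,E)\cong\mathrm{Hom}_{kG}(E^{*},X^{*})$, and that $E\otimes W$ is $kG$-split iff $(E\otimes W)^{*}\cong E^{*}\otimes W^{*}$ is. Throughout I would rely on the tensor--hom adjunction
\[
\mathrm{Hom}_{kG}(W\otimes V,M)\cong\mathrm{Hom}_{kG}(V,W^{*}\otimes M),
\]
valid because $W$ is finite-dimensional, together with the fact that a $W$-projective module is, by definition, a direct summand of some $W\otimes V$, so one only needs to verify (ii) on modules of the form $X=W\otimes V$.

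For (ii)~$\Rightarrow$~(i), take $X=U_{3}\otimes W\otimes W^{*}\cong W\otimes(U_{3}\otimes W^{*})$, which is $W$-projective, together with the canonical map $\varphi=\mathrm{id}_{U_{3}}\otimes\mathrm{ev}\colon U_{3}\otimes W\otimes W^{*}\to U_{3}$ arising from the evaluation $\mathrm{ev}\colon W\otimes W^{*}\to k$. By hypothesis $\varphi$ lifts to some $\tilde\varphi\colon X\to U_{2}$ with $f_{2}\tilde\varphi=\varphi$. Fixing dual bases $\{w_{i}\}$ of $W$ and $\{w_{i}^{*}\}$ of $W^{*}$, define
\[
s\colon U_{3}\otimes W\to U_{2}\otimes W,\qquad s(x\otimes w)=\sum_{i}\tilde\varphi(x\otimes w\otimes w_{i}^{*})\otimes w_{i}.
\]
The $kG$-linearity of $s$ follows from the $G$-invariance of $\alpha_{W}^{*}(1)=\sum_{i}w_{i}^{*}\otimes w_{i}$, and the identity $\sum_{i}w_{i}^{*}(w)w_{i}=w$ together with $f_{2}\tilde\varphi=\varphi$ yields $(f_{2}\otimes\mathrm{id}_{W})s=\mathrm{id}_{U_{3}\otimes W}$, so $E$ is $W$-split.

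For (i)~$\Rightarrow$~(ii), the plan is first to deduce from (i) that $W^{*}\otimes E$ is $kG$-split, and then to use this to produce the required lifts. The transition from $E\otimes W$ to $W^{*}\otimes E$ proceeds via the natural isomorphism $\mathrm{Hom}_{kG}(U_{i}\otimes W,U_{j}\otimes W)\cong\mathrm{Hom}_{kG}(U_{i}\otimes W^{*},U_{j}\otimes W^{*})$ built by composing two tensor--hom adjunctions with the swap $W\otimes W^{*}\cong W^{*}\otimes W$; tracking identities and the maps $f_{\ell}\otimes\mathrm{id}_{W}$ through the coevaluation $\alpha_{W}^{*}$ shows that sections are carried to sections. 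Once $W^{*}\otimes f_{2}$ is known to admit a $kG$-linear section, any $\varphi\in\mathrm{Hom}_{kG}(W\otimes V,U_{3})$ has adjunct $\hat\varphi\in\mathrm{Hom}_{kG}(V,W^{*}\otimes U_{3})$ which composes with the section to give $\psi\in\mathrm{Hom}_{kG}(V,W^{*}\otimes U_{2})$, and the adjunct $\tilde\varphi\colon W\otimes V\to U_{2}$ of $\psi$ satisfies $f_{2}\tilde\varphi=\varphi$ by naturality.

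The main obstacle is the adjunction--swap bridge relating the $kG$-splittings of $E\otimes W$ and $W^{*}\otimes E$: one must verify that the composite natural isomorphism really sends sections to sections, which comes down to careful bookkeeping of adjuncts through $\alpha_{W}^{*}$ and the swap isomorphism. Once that bridge is established, both directions reduce to routine manipulations with the tensor--hom adjunction.
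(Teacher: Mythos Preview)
The paper supplies no proof of this lemma; it is stated with a bare citation to Okuyama's preprint. There is therefore nothing in the paper itself to compare your argument against.

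Your argument is correct. The direction (ii)~$\Rightarrow$~(i) and the duality reduction for (i)~$\Leftrightarrow$~(iii) are routine as you describe. For (i)~$\Rightarrow$~(ii), the bridge you flag as the main obstacle---passing from the splitting of $E\otimes W$ to that of $E\otimes W^{*}$---does go through: the composite isomorphism
\[
\mathrm{Hom}_{kG}(U_{i}\otimes W,\,U_{j}\otimes W)\ \cong\ \mathrm{Hom}_{kG}(U_{i},\,U_{j}\otimes W^{*}\otimes W)\ \cong\ \mathrm{Hom}_{kG}(U_{i}\otimes W^{*},\,U_{j}\otimes W^{*})
\]
(built from the two tensor--hom adjunctions and the swap $W\otimes W^{*}\cong W^{*}\otimes W$) is natural in both $U_{i}$ and $U_{j}$, and a dual-basis check shows it carries $\mathrm{id}_{U_{3}\otimes W}$ to $\mathrm{id}_{U_{3}\otimes W^{*}}$. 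Naturality in the target variable then forces any section of $f_{2}\otimes\mathrm{id}_{W}$ to be sent to a section of $f_{2}\otimes\mathrm{id}_{W^{*}}$, so $E\otimes W^{*}$ splits and the adjunction finishes the job exactly as you outline.
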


    \begin{lemma}
        \label{basic-properties-Delta-projective-p-perm-tensor-functor}
        Let $G$ and $G'$ be finite groups with a common $p$-subgroup $P$
        such that $\mathcal{F}_{P}(G)=\mathcal{F}_{P}(G')$.
        Let $M$ be a $\Delta P$-projective $p$-permutation
        $k[G\times G']$-module,
        and $Q$ a subgroup of $P$.
        Then the following hold:
        \begin{enumerate}[label={$(\mathrm{\roman*})$}]
            \item If $U$ is a $Q$-projective $kG$-module,
            then $U\otimes_{kG}M$ is $Q$-projective.
            \item If a short exact sequence of $kG$-modules
            \[
                \begin{tikzcd}
                    E:0\arrow[r]
                    & U_{1}\arrow[r,"f_{1}"]
                    & U_{2}\arrow[r,"f_{2}"]
                    & U_{3}\arrow[r]
                    & 0
                \end{tikzcd}
            \]
            is $Q$-split, then $E\otimes_{kG}M$ is $Q$-split.
            \end{enumerate}
            Moreover,
            if $B$ and $B'$ are blocks of $G$ and $G'$,
            respectively, with a common defect group $P$,
            and $M$ is a $B$-$B'$-bimodule
            that is a $\Delta P$-projective $p$-permutation
            $k[G\times G']$-module,
            then the following holds.

            \begin{enumerate}[label={$(\mathrm{\roman*})$}]
            \setcounter{enumi}{2}
            \item The functor ${-}\otimes_{B}M$ induces
            a functor of triangulated categories
            $\underline{\textnormal{mod}}^{Q}(B)
            \rightarrow\underline{\textnormal{mod}}^{Q}(B')$.
        \end{enumerate}

    \end{lemma}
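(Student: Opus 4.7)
The plan is to prove (iii) formally from (i) and (ii), so the substantive work is in (i) and (ii). In both, the main tool is the Mackey formula applied to $M$, together with the hypothesis $\mathcal{F}_{P}(G) = \mathcal{F}_{P}(G')$ to convert $G$-conjugations into $G'$-conjugations (and vice versa). For (i), since $U$ is $Q$-projective, $U$ is a direct summand of $U{\downarrow}_{Q}{\uparrow}^{G}$, so $U \otimes_{kG} M$ is a summand of $U{\downarrow}_{Q} \otimes_{kQ} M{\downarrow}_{Q \times G'}$. Since $M$ is $\Delta P$-projective, $M \mid N{\uparrow}^{G \times G'}_{\Delta P}$ for some $k\Delta P$-module $N$; applying Mackey to $N{\uparrow}{\downarrow}_{Q \times G'}$, the summands are induced from twisted diagonals $\Delta_{\psi_{g}} R_{g}$ with $R_{g} = Q \cap g^{-1} P g \leq Q$ and $\psi_{g} : R_{g} \to P$ the isomorphism given by conjugation by $g \in G$. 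Tensoring on the left with $U{\downarrow}_{Q}$ yields a right $kG'$-module induced from $\psi_{g}(R_{g}) \leq P \leq G'$. The key step is that $\psi_{g}$ is a morphism in $\mathcal{F}_{P}(G) = \mathcal{F}_{P}(G')$, so it is realized by conjugation by some element of $G'$; hence $\psi_{g}(R_{g})$ is $G'$-conjugate to $R_{g} \leq Q$, and each summand is $Q$-projective as a $kG'$-module.

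For (ii), a symmetric Mackey decomposition of $M{\downarrow}_{G \times Q}$ gives summands induced from twisted diagonals $\Delta_{\phi_{g'}} T_{g'}$ with $T_{g'} = g' Q (g')^{-1} \cap P \leq P \leq G$ and $\phi_{g'} : T_{g'} \to P$ conjugation by $g' \in G'$. Again $\phi_{g'} \in \mathcal{F}_{P}(G') = \mathcal{F}_{P}(G)$, so $T_{g'}$ is $G$-conjugate to $\phi_{g'}(T_{g'}) \leq Q$; hence $E{\downarrow}_{T_{g'}}$ splits (by $G$-conjugating and further restricting the split $E{\downarrow}_{Q}$), and tensoring and inducing preserve splitness. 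Restricting the result to $Q$ on the right, each contribution to $(E \otimes_{kG} M){\downarrow}_{Q}$ is a split short exact sequence of $kQ$-modules, and summing the Mackey pieces gives (ii).

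For (iii), by (i) the functor $- \otimes_{B} M$ sends $Q$-projective $B$-modules to $Q$-projective $B'$-modules, hence sends morphisms factoring through a $Q$-projective to morphisms of the same type, and descends to a functor $\underline{\mathrm{mod}}^{Q}(B) \to \underline{\mathrm{mod}}^{Q}(B')$. Applying the functor to the defining $Q$-split short exact sequence $0 \to U \to I_{Q}(U) \to \Omega^{-1}_{Q}(U) \to 0$, (ii) keeps the image $Q$-split, while (i) applied to $I_{Q}(U) = U \otimes (k_{Q}{\uparrow}^{G})^{*} \otimes k_{Q}{\uparrow}^{G}$, which is $Q$-projective, makes $I_{Q}(U) \otimes_{B} M$ $Q$-projective. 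Hence the image sequence also witnesses $\Omega^{-1}_{Q}$ of the image, yielding a natural isomorphism $\Omega^{-1}_{Q}(U) \otimes_{B} M \cong \Omega^{-1}_{Q}(U \otimes_{B} M)$ in $\underline{\mathrm{mod}}^{Q}(B')$; a standard triangle associated to $f_{1} : U_{1} \to U_{2}$ is sent to the standard triangle associated to $f_{1} \otimes_{B} M$, confirming that $- \otimes_{B} M$ is triangulated. The main obstacle is the Mackey bookkeeping, specifically identifying the intersections $(Q \times G') \cap \Delta P^{(g,g')}$ as twisted diagonals and invoking $\mathcal{F}_{P}(G) = \mathcal{F}_{P}(G')$ to switch between $G$- and $G'$-conjugacies; this is precisely what makes the statement work despite $G$ and $G'$ sharing only $P$ and its fusion system.
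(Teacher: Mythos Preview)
Your proposal is correct. Part (i) follows the same Mackey-plus-fusion strategy as the paper, though you work with a general source $N$ for $M$ while the paper first uses the $p$-permutation hypothesis to write $M\mid\bigoplus_i kG\otimes_{kQ_i}kG'$; both reach the same conclusion by converting a $G$-conjugation into a $G'$-conjugation via $\mathcal{F}_P(G)=\mathcal{F}_P(G')$.

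The genuine difference is in (ii). You restrict $M$ to $G\times Q$, Mackey-decompose into pieces induced from twisted diagonals $\Delta_{\phi_{g'}}T_{g'}$, use fusion to see that each $T_{g'}$ is $G$-conjugate into $Q$ so that $E{\downarrow}_{T_{g'}}$ splits, and then push the splitness through tensor and induction. The paper instead avoids a second Mackey computation entirely: it uses the adjunction between $-\otimes_{kG}M$ and $-\otimes_{kG'}M^{*}$ together with Okuyama's characterization of $Q$-split sequences (\cref{equiv-cond-relative-split}). Since (i) applied to $M^{*}$ shows that $Y\otimes_{kG'}M^{*}$ is $Q$-projective for every $Q$-projective $kG'$-module $Y$, the adjunction immediately transfers exactness of $\mathrm{Hom}_{kG}(-,Y\otimes_{kG'}M^{*})$ on $E$ to exactness of $\mathrm{Hom}_{kG'}(-,Y)$ on $E\otimes_{kG}M$. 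Your route is more self-contained and shows explicitly why each Mackey piece contributes a split sequence; the paper's route is shorter and exhibits (ii) as a formal consequence of (i), at the cost of invoking the extra lemma on relative splitness. Part (iii) is handled the same way in both.
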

    \begin{proof}
        (i):
        Let $U$ be a $Q$-projective $kG$-module.
        Then $U$ is a direct summand of $V{\uparrow}_{Q}^{G}$
        for some $kG$-module $V$.
        Since $M$ is a $\Delta P$-projective $p$-permutation module,
        $M$ is a direct summand of
        $(\bigoplus_{i}k_{Q_{i}}{\uparrow}^{P}){\uparrow}_{\Delta P}^{G\times G'}$,
        where $\bigoplus_{i}k_{Q_{i}}{\uparrow}^{P}$ is considered
        as a $k\Delta P$-module via the isomorphism $P\cong \Delta P$.
        We see that
        \[
            M\mid
            (\bigoplus_{i}k_{Q_{i}}{\uparrow}^{P}){\uparrow}_{\Delta P}^{G\times G'}
            \cong (kG\otimes (\bigoplus_{i}k_{Q_{i}}{\uparrow}^{P}))
            \otimes_{kP}kG'
            \cong \bigoplus_{i}kG\otimes_{kQ_{i}}kG'
        \]
        Hence we have that
        \begin{align*}
            U\otimes_{kG}M
            \mid V{\uparrow}_{Q}^{G}\otimes_{kG}M
            \mid V{\uparrow}_{Q}^{G}\otimes_{kG}(\bigoplus_{i}kG\otimes_{kQ_{i}}kG')
            &\cong \bigoplus_{i}V{\uparrow}_{Q}^{G}{\downarrow}_{Q_{i}}^{}{\uparrow}_{}^{G'}\\
            &\cong \bigoplus_{i}\bigoplus_{t\in [Q\backslash G/Q_{i}]}
            V^{t}{\uparrow}_{Q^{t}\cap Q_{i}}^{G'}.
        \end{align*}
        Hence any indecomposable summand of $U\otimes_{kG}M$ is
        $Q^{t}\cap Q_{i}$-projective for some element $t\in G$.
        Then $t$ induces a conjugation map
        $Q\cap Q_{i}^{t^{-1}}\rightarrow Q^{t}\cap Q_{i}$ in $\mathcal{F}_{P}(G)$.
        By the assumption that $\mathcal{F}_{P}(G)=\mathcal{F}_{P}(G')$,
        there is an element $s\in G'$ such that $Q^{s}\cap Q_{i}^{t^{-1}s}=Q^{t}\cap Q_{i}$.
        This implies that $U\otimes_{kG}M$ is $Q$-projective.

        (ii):
        Let $Y$ be any $Q$-projective $kG'$-module.
        Since $M$ is projective as a left $kG$-module,
        the functor $-\otimes_{kG}M$ is right adjoint to
        $-\otimes_{kG'}M^{*}$.
        Hence we have the following commutative diagram
        \[
            \begin{tikzcd}
                0\arrow[r]
                & \mathrm{Hom}_{kG}(U_{3},Y\otimes_{kG'}M^{*})
                \arrow[r,"{f_{2}}^{*}"]\arrow[d,"\rotatebox{90}{$\sim$}"]
                & \mathrm{Hom}_{kG}(U_{2},Y\otimes_{kG'}M^{*})
                \arrow[r,"{f_{1}}^{*}"]\arrow[d,"\rotatebox{90}{$\sim$}"]
                & \mathrm{Hom}_{kG}(U_{1},Y\otimes_{kG'}M^{*})
                \arrow[r]\arrow[d,"\rotatebox{90}{$\sim$}"]
                & 0
                \\
                0\arrow[r]
                & \mathrm{Hom}_{kG}(U_{3}\otimes_{kG}M,Y)
                \arrow[r,"{(f_{2}\otimes \mathrm{id}_{M})}^{*}"]
                & \mathrm{Hom}_{kG}(U_{2}\otimes_{kG}M,Y)
                \arrow[r,"{(f_{1}\otimes \mathrm{id}_{M})}^{*}"]
                & \mathrm{Hom}_{kG}(U_{1}\otimes_{kG}M,Y)
                \arrow[r]
                & 0.
            \end{tikzcd}
        \]
        By (i), $Y\otimes_{kG'}M^{*}$ is $Q$-projective,
        and hence by \cref{equiv-cond-relative-split},
        the first row in the diagram above is exact.
        This implies that $E\otimes M$ is $Q$-split again by \cref{equiv-cond-relative-split}.
        
        (iii):
        It follows from (i) that ${-}\otimes_{B}M$ induces
        a functor $\underline{\textrm{mod}}^{Q}(B)
        \rightarrow\underline{\textrm{mod}}^{Q}(B')$.
        By \cite[Proposition 6.3]{Carlson-Peng-Wheeler-1998},
        every distinguished triangle in $\underline{\textrm{mod}}^{W}(B)$
        is isomorphic to that arising from a $W$-split short exact sequence.
        Therefore we can show that the functor $\underline{\textrm{mod}}^{Q}(B)
        \rightarrow\underline{\textrm{mod}}^{Q}(B')$ induced by $-\otimes_{B}M$
        is a functor of triangulated categories
        in the same way as for the stable categories.
    \end{proof}

    Now we show the main result of this section.
    
    \begin{proposition}
        \label{relative-stable-induces-equiv-stable-categories}
        Let $B$ and $B'$ be blocks of finite groups $G$ and $G'$,
        respectively, with a common defect group $P$
        such that $\mathcal{F}_{P}(G)=\mathcal{F}_{P}(G')$.
        Let $M$ be a $B$-$B'$-bimodule
        that is a $\Delta P$-projective $p$-permutation module
        as a $k[G\times G']$-module,
        and $N$ a $B'$-$B$-bimodule
        that is a $\Delta P$-projective $p$-permutation module
        as a $k[G'\times G]$-module.
        Let $Q$ be a subgroup of $P$.
        If $(M,N)$ induces a relative $Q$-stable equivalence of Morita type,
        then ${-}\otimes_{B}M$ and ${-}\otimes_{B'}N$ are equivalences between
        $\underline{\textnormal{mod}}^{Q}(B)$ and $\underline{\textnormal{mod}}^{Q}(B')$
        as triangulated categories.
    \end{proposition}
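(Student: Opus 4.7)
\bigskip

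\noindent\textbf{Proof proposal.}
The plan is to use the decompositions $M\otimes_{B'}N\cong B\oplus X$ and $N\otimes_{B}M\cong B'\oplus Y$ supplied by the relative $Q$-stable equivalence together with \cref{basic-properties-Delta-projective-p-perm-tensor-functor}~(iii) to exhibit $-\otimes_{B}M$ and $-\otimes_{B'}N$ as mutually quasi-inverse triangulated functors. First, since $M$ and $N$ are $\Delta P$-projective $p$-permutation bimodules and $\mathcal{F}_{P}(G)=\mathcal{F}_{P}(G')$, \cref{basic-properties-Delta-projective-p-perm-tensor-functor}~(iii) already tells us that $-\otimes_{B}M$ and $-\otimes_{B'}N$ induce triangulated functors between $\underline{\textrm{mod}}^{Q}(B)$ and $\underline{\textrm{mod}}^{Q}(B')$. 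So the task reduces to checking that the two compositions become naturally isomorphic to the identities in the $Q$-stable categories.

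For any $B$-module $U$, the isomorphism $M\otimes_{B'}N\cong B\oplus X$ is natural and yields
\[
    U\otimes_{B}M\otimes_{B'}N\cong U\oplus(U\otimes_{B}X),
\]
so it suffices to show that $U\otimes_{B}X$ is $Q$-projective as a $kG$-module, since then it is a zero object in $\underline{\textrm{mod}}^{Q}(B)$ and the canonical inclusion $U\hookrightarrow U\otimes_{B}M\otimes_{B'}N$ becomes a natural isomorphism in $\underline{\textrm{mod}}^{Q}(B)$. The analogous statement for $V\otimes_{B'}Y$ with $V$ a $B'$-module will give the reverse composition, and mutually inverse triangulated functors form a triangulated equivalence.

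The main step is therefore the following claim: \emph{if $X$ is a $B$-$B$-bimodule that is $(Q\times Q)$-projective as a $k[G\times G]$-module, then $U\otimes_{B}X$ is $Q$-projective as a right $kG$-module for every $B$-module $U$}. To see this, write $X$ as a direct summand of $V{\uparrow}_{Q\times Q}^{G\times G}$ for some $k[Q\times Q]$-module $V$. Viewing $V$ as a $(kQ,kQ)$-bimodule in the standard way, one has the bimodule identification
\[
    V{\uparrow}_{Q\times Q}^{G\times G}\cong kG\otimes_{kQ}V\otimes_{kQ}kG,
\]
exactly as in the computation $(k_{Q}{\uparrow}^{G})^{*}\otimes k_{Q}{\uparrow}^{G}\cong k_{Q\times Q}{\uparrow}^{G\times G}$ carried out earlier in the excerpt. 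Consequently
\[
    U\otimes_{B}(kG\otimes_{kQ}V\otimes_{kQ}kG)
    \cong(U{\downarrow}_{Q}^{G}\otimes_{kQ}V){\uparrow}_{Q}^{G},
\]
which is manifestly $Q$-projective, and hence so is its direct summand $U\otimes_{B}X$. Applying the same argument on the other side shows that $V\otimes_{B'}Y$ is $Q$-projective as a $kG'$-module for every $B'$-module $V$.

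Putting this together, the canonical natural transformations
\[
    \mathrm{id}_{\underline{\textrm{mod}}^{Q}(B)}\longrightarrow(-\otimes_{B}M)\circ(-\otimes_{B'}N),
    \qquad
    \mathrm{id}_{\underline{\textrm{mod}}^{Q}(B')}\longrightarrow(-\otimes_{B'}N)\circ(-\otimes_{B}M)
\]
coming from the splittings $B\hookrightarrow M\otimes_{B'}N$ and $B'\hookrightarrow N\otimes_{B}M$ become natural isomorphisms after passing to the $Q$-stable categories. Since both functors are triangulated by \cref{basic-properties-Delta-projective-p-perm-tensor-functor}~(iii), this will complete the proof. The main technical obstacle is the bimodule calculation above identifying $V{\uparrow}_{Q\times Q}^{G\times G}$ with $kG\otimes_{kQ}V\otimes_{kQ}kG$ and then with an induction from $Q$ on the right $kG$-side; once that is in hand, everything else is formal.
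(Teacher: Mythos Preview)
Your proposal is correct and follows essentially the same route as the paper's proof: invoke \cref{basic-properties-Delta-projective-p-perm-tensor-functor}~(iii) for the triangulated functors, then show that tensoring with the $(Q\times Q)$-projective error terms $X$ and $Y$ yields $Q$-projective modules, so that the compositions agree with the identity in the $Q$-stable categories. The paper simply asserts that $U\otimes_{B}X$ is $Q$-projective and refers to ``the same argument as for the stable categories,'' whereas you spell out the bimodule identification $V{\uparrow}_{Q\times Q}^{G\times G}\cong kG\otimes_{kQ}V\otimes_{kQ}kG$ explicitly; this is a welcome expansion rather than a different method.
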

    \begin{proof}
        It follows from \cref{basic-properties-Delta-projective-p-perm-tensor-functor} (iii)
        that $-\otimes_{B}M$ and $-\otimes_{B'}N$ induce functors
        of triangulated categories between $\underline{\textrm{mod}}^{Q}(B)$
        and $\underline{\textrm{mod}}^{Q}(B')$.
        Therefore it suffices to show that the functors are equivalences.
        For a $B$-$B'$-bimodule $X$
        that is $Q\times Q$-projective as a $k[G\times G']$-module,
        and a $kG$-module $U$,
        we see that $U\otimes_{B}X$ is $Q$-projective.
        Hence the result follows from the same argument as for the stable categories.
    \end{proof}

    We end this section with a remark on the definition of 
    the relative stable category \underline{mod}$^{W}(B)$.
    For \underline{mod}$^{W}(B)$,
    the $kG$-module $W$ does not necessarily lie in $B$
    since a $kG$-module lying in $B$ may be projective
    relative to modules lying in blocks other than $B$.
    Indeed, suppose that $B$ is a nonprincipal block of $G$,
    and $S$ is a simple $B$-module.
    Then $P(S)$ is a direct summand of $P(k_{G})\otimes S$,
    where $P(S)$ and $P(k_{G})$ are projective covers of $S$ and $k_{G}$,
    respectively.
    This means that $P(S)$, which lies in $B$,
    is projective relative to $P(k_{G})$,
    which lies in $B_{0}(G)$.
    In fact, this observation holds for any $kG$-module,
    not just for projective modules:
    \begin{remark}
        For a subgroup $H$, a $kG$-module $U$ is $k_{H}{\uparrow}^{G}$-projective
        $($or equivalently $H$-projective$)$
        if and only if $U$ is projective relative to $S(G,H)$,
        which lies in $B_{0}(G)$.
        Indeed, if $U$ is $S(G,H)$-projective,
        then it follows from the Frobenius reciprocity that
        $U$ is $H$-projective.
        Conversely, suppose that $U$ is $H$-projective.
        There is an $H$-split epimorphism $S(G,H)\rightarrow k_{G}$
        since $S(G,H)$ is a relative $H$-projective cover of $k_{G}$
        $($see \textnormal{\cite[\textit{Proposition }3.1]{Thevenaz-1985}}$)$.
        Then $S(G,H)\otimes U\rightarrow U$ is still an $H$-split epimorphism.
        Since $U$ is $H$-projective, the epimorphism splits,
        and hence $U$ is $S(G,H)$-projective.
    \end{remark}

    \section{Proofs of \cref{generalization-Broue-gluing} and \cref{generalization-Linckelmann}}

    In this section, we prove
    \cref{generalization-Broue-gluing} and \cref{generalization-Linckelmann}.

    We need the following two lemmas for the proof of \cref{generalization-Broue-gluing}.

    \begin{lemma}\textnormal{(see \cite[Lemma 3.3]{Koshitani-Lassueur-2020})}
        \label{equivalent-condition-having-Scott-as-direct-summand}
        Let $G$ and $G'$ be finite groups
        with a common Sylow $p$-subgroup $P$
        such that $\mathcal{F}_P(G)=\mathcal{F}_P(G')$,
        $M$ a $\Delta P$-projective $p$-permutation $k[G\times G']$-module,
        and $Q$ a subgroup of $P$.
        Then the following are equivalent:
        \begin{enumerate}[label={$(\mathrm{\roman*})$}]
            \item The Scott module $S(G',Q)$ is a direct summand of $k_{G}\otimes_{kG}M$.
            \item The Scott module $S(G\times G',\Delta Q)$ is a direct summand of $M$.
        \end{enumerate}
    \end{lemma}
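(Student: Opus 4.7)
The plan is to analyze indecomposable trivial source summands of $M$ via the functor $k_G\otimes_{kG}-$, using two fundamental tools. First, the bimodule identification $k_{\Delta R}{\uparrow}^{G\times G'}\cong kG\otimes_{kR}kG'$ yields
\[
k_G\otimes_{kG}\bigl(k_{\Delta R}{\uparrow}^{G\times G'}\bigr)\cong k_{R}{\uparrow}^{G'}\qquad(R\leq P).
\]
Second, the adjunction
\[
\mathrm{Hom}_{kG'}\bigl(k_G\otimes_{kG}M,\,N\bigr)\cong\mathrm{Hom}_{k[G\times G']}\bigl(M,\,\mathrm{Inf}_{G'}^{G\times G'}N\bigr),
\]
specialized to $N=k_{G'}$, translates multiplicities of $k_{G'}$ in the top of $k_G\otimes_{kG}M$ into multiplicities of $k_{G\times G'}$ in the top of $M$.

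For (ii)$\Rightarrow$(i), write $M=S(G\times G',\Delta Q)\oplus M'$ and apply $k_G\otimes_{kG}-$. The summand $k_G\otimes_{kG}S(G\times G',\Delta Q)$ then lies inside $k_Q{\uparrow}^{G'}$, and by the adjunction its top contains $k_{G'}$ exactly once, matching the unique $k_{G\times G'}$ in the top of the Scott module. Uniqueness of the Scott summand of $k_Q{\uparrow}^{G'}$ forces $S(G',Q)$ to appear, so (i) holds.

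For (i)$\Rightarrow$(ii), decompose $M=\bigoplus_i M_i$ into indecomposable trivial source summands; each $M_i$ is $\Delta P$-projective, so its vertex is $(G\times G')$-conjugate to $\Delta R_i$ for some $R_i\leq P$. By Krull--Schmidt there exists $i$ with $S(G',Q)\mid k_G\otimes_{kG}M_i$, and since $k_G\otimes_{kG}M_i$ is a summand of $k_{R_i}{\uparrow}^{G'}$, the characterization of Scott modules by vertex forces $R_i$ to be $G'$-conjugate to $Q$. Pick $g'\in G'$ with $R_i^{g'}=Q$. Then $c_{g'}\in\mathrm{Hom}_{\mathcal{F}_P(G')}(R_i,Q)=\mathrm{Hom}_{\mathcal{F}_P(G)}(R_i,Q)$, so there exists $g\in G$ with $R_i^{g}=Q$ and $c_g=c_{g'}$ on $R_i$. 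Then $(g,g')^{-1}\Delta R_i(g,g')=\Delta Q$, so we may assume $M_i$ is an indecomposable summand of $k_{\Delta Q}{\uparrow}^{G\times G'}$. The adjunction gives $\mathrm{Hom}_{k[G\times G']}(M_i,k_{G\times G'})\neq 0$, and uniqueness of the Scott summand of $k_{\Delta Q}{\uparrow}^{G\times G'}$ forces $M_i\cong S(G\times G',\Delta Q)$, so (ii) holds.

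The main obstacle is the fusion-system step of lifting a $G'$-conjugacy between subgroups of $P$ to a single pair $(g,g')\in G\times G'$ inducing the same isomorphism on both components. The hypothesis $\mathcal{F}_P(G)=\mathcal{F}_P(G')$ is exactly what makes this possible; without it one could have a summand $M_i$ with a ``twisted'' diagonal vertex whose image under $k_G\otimes_{kG}-$ contains $S(G',Q)$ without $M_i$ itself being isomorphic to $S(G\times G',\Delta Q)$.
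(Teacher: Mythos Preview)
The paper does not prove this lemma; it simply cites \cite[Lemma 3.3]{Koshitani-Lassueur-2020}. Your argument is correct and is essentially the standard one: the adjunction between $k_G\otimes_{kG}-$ and inflation along $G\times G'\to G'$ identifies occurrences of $k_{G'}$ in the top of $k_G\otimes_{kG}M$ with occurrences of $k_{G\times G'}$ in the top of $M$, and together with the bimodule identity $k_G\otimes_{kG}(k_{\Delta R}{\uparrow}^{G\times G'})\cong k_R{\uparrow}^{G'}$ this pins down the Scott summands on both sides.

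One sentence deserves a word of expansion. When you write that ``the characterization of Scott modules by vertex forces $R_i$ to be $G'$-conjugate to $Q$'', note that $S(G',Q)\mid k_{R_i}{\uparrow}^{G'}$ by itself only gives $Q\leq_{G'}R_i$. The missing half is that $S(G',Q)$ has $k_{G'}$ in its top, and since $\dim_k\mathrm{Hom}_{kG'}(k_{R_i}{\uparrow}^{G'},k_{G'})=1$ the unique indecomposable summand of $k_{R_i}{\uparrow}^{G'}$ with this property is $S(G',R_i)$; hence $S(G',Q)\cong S(G',R_i)$ and then the vertex characterization gives the $G'$-conjugacy. You clearly have this in mind (you use exactly this mechanism elsewhere via the adjunction), so this is only a matter of making the sentence self-contained. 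The fusion-system step translating the $G'$-conjugacy $R_i\to Q$ into a $(G\times G')$-conjugacy $\Delta R_i\to\Delta Q$ is handled correctly and is indeed the place where the hypothesis $\mathcal{F}_P(G)=\mathcal{F}_P(G')$ is genuinely needed.
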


    Although we may see the following lemma
    from \cite[Proposition 4.6]{Linckelmann-2015},
    we show it for the convenience of the reader.

    \begin{lemma}\label{block-direct-summand}
        Let $G$ and $G'$ be finite groups
        with a common Sylow $p$-subgroup $P$
        such that $\mathcal{F}_{P}(G)=\mathcal{F}_{P}(G')$,
        and let $M=S(G\times G',\Delta P)$.
        Then there is an isomorphism of $B_{0}(G)$-$B_{0}(G)$-bimodules
        \[
            M\otimes_{B_{0}(G')}M^{*}\cong B_{0}(G)\oplus X,
        \]
        where $X$ is a $\Delta P$-projective $p$-permutation module
        as a $k[G\times G]$-module.
    \end{lemma}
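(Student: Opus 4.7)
My plan is to realize $M \otimes_{B_0(G')} M^*$ as a $\Delta P$-projective $p$-permutation $k[G \times G]$-module containing $B_0(G)$ as a direct summand, after which the complement $X$ inherits the required properties automatically.

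Since $\Delta P$ meets both $G \times 1$ and $1 \times G'$ trivially, $M$ is projective as a left $kG$-module and as a right $kG'$-module; in particular the principal block idempotent $e_0'$ acts as the identity on $M$, so $M \otimes_{B_0(G')} M^* = M \otimes_{kG'} M^*$. Writing $M$ and $M^*$ as direct summands of $k_{\Delta P}{\uparrow}^{G \times G'}$ and $k_{\Delta P}{\uparrow}^{G' \times G}$ respectively, the tensor product is a summand of
\[
    k_{\Delta P}{\uparrow}^{G \times G'} \otimes_{kG'} k_{\Delta P}{\uparrow}^{G' \times G}
    \cong kG \otimes_{kP} kG' \otimes_{kP} kG.
\]
A Mackey decomposition of $kG'$ as a $k[P \times P]$-bimodule over the double cosets $[P \backslash G'/P]$ yields $\bigoplus_{t} k_{H_t}{\uparrow}^{G \times G}$, with $H_t = \{(tst^{-1}, s) : s \in P \cap t^{-1}Pt\}$. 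The hypothesis $\mathcal{F}_{P}(G) = \mathcal{F}_{P}(G')$ implies each conjugation $c_t$ lifts to conjugation by an element of $G$, so $H_t$ is $(G \times G)$-conjugate to $\Delta(P \cap t^{-1}Pt) \leq \Delta P$. Hence $M \otimes_{kG'} M^*$ is $\Delta P$-projective and $p$-permutation as a $k[G \times G]$-module.

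Next, the principal block $B_0(G)$ is indecomposable as a $k[G \times G]$-module, since its endomorphism ring equals $Z(B_0(G))$, which is local; it has vertex $\Delta P$ by the standard defect-group fact, and $k_{G \times G}$ in its top via the augmentation, so $B_0(G) \cong S(G \times G, \Delta P)$. To show this Scott module is a direct summand of $N := M \otimes_{B_0(G')} M^*$, I apply \cref{equivalent-condition-having-Scott-as-direct-summand} with both groups taken to be $G$: it suffices to show $S(G, P)$ is a direct summand of $k_G \otimes_{kG} N = k_G \otimes_{kG} M \otimes_{B_0(G')} M^*$. Applying the same lemma to $M$ gives that $k_G \otimes_{kG} M$ contains $S(G', P)$ as a summand, and a parallel Mackey computation (taking left $G$-coinvariants of the decomposition above) identifies $k_G \otimes_{kG} N$ as a summand of $\bigoplus_{t} k_{P \cap t^{-1}Pt}{\uparrow}^G$, in which the $t = 1$ term is $k_P{\uparrow}^G$ containing $S(G, P)$.

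The main obstacle is the last step: ensuring that the $S(G, P)$ summand from the $t = 1$ Mackey contribution genuinely lies inside the specific summand $k_G \otimes_{kG} N$, rather than being absorbed into cross-terms coming from non-Scott parts of $k_{\Delta P}{\uparrow}^{G \times G'}$ and $k_{\Delta P}{\uparrow}^{G' \times G}$. Overcoming this requires carefully pairing the Scott decompositions on the two induced modules through \cref{equivalent-condition-having-Scott-as-direct-summand} and tracking vertices and multiplicities through the Mackey decomposition using the fusion system hypothesis.
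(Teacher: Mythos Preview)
Your argument that $M\otimes_{B_0(G')}M^*$ is a $\Delta P$-projective $p$-permutation $k[G\times G]$-module is correct and matches the paper's (the paper cites a general vertex--source result rather than writing out the Mackey decomposition, but the content is the same).

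The gap you flag in the final step is genuine, but the missing observation is short. You correctly reduce, via \cref{equivalent-condition-having-Scott-as-direct-summand}, to showing that $S(G,P)$ is a summand of $k_G\otimes_{kG}N$, and you note that $k_G\otimes_{kG}M$ contains $S(G',P)$. What you overlook is that $P$ is a \emph{Sylow} $p$-subgroup of $G'$, so $S(G',P)\cong k_{G'}$. Thus $k_G\otimes_{kG}M\cong k_{G'}\oplus(\text{rest})$, and hence $k_G\otimes_{kG}N$ contains $k_{G'}\otimes_{kG'}M^*$ as a direct summand. Now apply \cref{equivalent-condition-having-Scott-as-direct-summand} a second time, to $M^*$ viewed as a $k[G'\times G]$-module: it gives that $S(G,P)$ is a summand of $k_{G'}\otimes_{kG'}M^*$, and since $P$ is Sylow in $G$ as well, $S(G,P)\cong k_G$. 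So $k_G\otimes_{kG}N\cong k_G\oplus Y$, and one final application of the lemma yields $S(G\times G,\Delta P)\cong B_0(G)$ as a summand of $N$.

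This is precisely the paper's route: it applies \cref{equivalent-condition-having-Scott-as-direct-summand} twice in succession, each time exploiting that the Scott module with respect to a Sylow subgroup is trivial. Your attempted Mackey bookkeeping is unnecessary once this is in place.
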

    \begin{proof}
        Let $B=B_{0}(G)$ and $B'=B_{0}(G')$.

        Let $X'$ be an indecomposable summand of $M\otimes_{B'}M^{*}$.
        Then by \cite[Theorem 5.1.16]{Linckelmann-book-2018-vol1},
        $X'$ has a vertex $R$ that is a subgroup of
        $\Delta_{t}(P\cap P^{t^{-1}})
        :=\{(x,x^{t}) \mid x\in P\cap P^{t^{-1}}\}$ for some $t\in G'$,
        and a source that is isomorphic to
        $(k_{P}\otimes k_{P^{t^{-1}}}){\downarrow}_{R}^{\Delta_{t}(P\cap P^{t^{-1}})}
        \cong k_{R}$,
        where $k_{P}\otimes k_{P^{t^{-1}}}$ is considered
        as $k\Delta_{t}(P\cap P^{t^{-1}})$-module via the isomorphism
        $P\cap P^{t^{-1}}\cong \Delta_{t}(P\cap P^{t^{-1}})$.
        Hence $M\otimes_{B'}M^{*}$ is a $p$-permutation module.
        We see that $t$ induces a conjugation map
        $P\cap P^{t^{-1}}\rightarrow P^{t}\cap P$ in $\mathcal{F}_{P}(G')$.
        By the assumption that $\mathcal{F}_P(G)=\mathcal{F}_P(G')$,
        there is an element $s\in G$
        such that $x^{s}=x^{t}$ for any $x\in P\cap P^{t^{-1}}$.
        Hence we have that
        \[
            R^{^{(s,1)}}
            \leq\{(x^{s},x^{t})\mid x\in P\cap P^{t^{-1}}\}
            =\Delta(P^{t}\cap P),
            \]
        which implies that $R\leq_{G\times G}\Delta P$.
        Thus $M\otimes_{B'}M^{*}$ is a $\Delta P$-projective $p$-permutation module.

        Since $P$ is a Sylow $p$-subgroup of $G'$,
        it follows that $S(G',P)\cong S(G',G')=k_{G'}$.
        Hence we see, using \cref{equivalent-condition-having-Scott-as-direct-summand} twice,
        that $(k_{G}\otimes_{B} M)\otimes_{B'}M^{*}\cong k_{G}\oplus Y$
        for some $B$-module $Y$.
        Hence again by \cref{equivalent-condition-having-Scott-as-direct-summand}, 
        we see that $S(G\times G,\Delta P)\cong B$ is a direct summand of
        $M\otimes_{B'}M^{*}$.
    \end{proof}

    We now prove \cref{generalization-Broue-gluing}.
    
    \begin{proof}[Proof of \cref{generalization-Broue-gluing}]
        Note that in the proof, we use the isomorphism
        \[
            (M\otimes_{B'}M^*)(\Delta Q)
            \cong
            M(\Delta Q)\otimes_{B_{0}(C_{G'}(Q))}M(\Delta Q)^*
        \]
        for any subgroup $Q$ of $P$(see \cite[proof of Theorem 4.1]{rickard-1996}).

        Let $B=B_0(G)$ and $B'=B_0(G')$.

        $(\mathrm{ii})\Rightarrow(\mathrm{i})$:
        We can write $M\otimes_{B'}M^{*}\cong B\oplus X$,
        where $X$ is a $B$-$B$-bimodule that is $Z\times Z$-projective
        as a $k[G\times G]$-module.
        Hence we have that for any subgroup $Q$ of $P$,
        \[
            M(\Delta Q)\otimes_{B_{0}(C_{G'}(Q))}M(\Delta Q)^*
            \cong (M\otimes_{B'}M^*)(\Delta Q)
            \cong (B\oplus X)(\Delta Q)
            \cong B_{0}(C_{G}(Q))\oplus X(\Delta Q).
        \]
        Since $X$ is $Z\times Z$-projective,
        it follows from \cref{Brauer-construction-vertex} that
        if $Q$ is properly containing $Z$,
        then $X(\Delta Q)=0$.
        Similarly, we see that 
        $M(\Delta Q)^{*}\otimes_{B_{0}(C_{G}(Q))}M(\Delta Q)\cong B_{0}(C_{G'}(Q))$
        for any subgroup $Q$ of $P$ properly containing $Z$.

        $(\mathrm{i})\Rightarrow(\mathrm{ii})$:
        By \cref{block-direct-summand},
        we can write $M\otimes_{B'}M^{*}\cong B\oplus X$,
        where $X$ is a $\Delta P$-projective $p$-permutation $k[G\times G']$-module.
        To show that $X$ is $Z\times Z$-projective,
        we show that $X$ is $\Delta Z$-projective.
        Let $X'$ be any indecomposable summand of $X$.
        Since $\Delta Z$ is a $p$-subgroup of the center $Z(G\times G')$
        contained in $\Delta P$,
        it follows from \cref{Brauer-construction-trivial} that
        \[
            (M\otimes_{B'}M^*)(\Delta Z)
            \cong M(\Delta Z)\otimes_{B_{0}(C_{G'}(Z))}M(\Delta Z)^*
            \cong M\otimes_{B'}M^{*}.
        \]
        On the other hand, we have that 
        \[
            (M\otimes_{B'}M^*)(\Delta Z)
            = (B\oplus X)(\Delta Z)
            \cong B(\Delta Z)\oplus X(\Delta Z)
            \cong B\oplus X(\Delta Z),
        \]
        where the last isomorphism holds
        as $B(\Delta Z)\cong B$ by \cref{Brauer-construction-trivial}.
        Hence it follows that $X(\Delta Z)\cong X$.
        This implies that $X'(\Delta Z)\neq 0$,
        and hence $X'$ has a vertex $R$ containing $\Delta Z$
        by \cref{Brauer-construction-vertex} (i).
        Now, we have that $\Delta Z\leq R\leq_{G\times G}\Delta P$.
        This means that if we can show that
        $X'(\Delta Q)=0$ for any subgroup $Q$ of $P$ properly containing $Z$,
        then it follows from \cref{Brauer-construction-vertex} (ii) that $R=\Delta Z$.
        Therefore, let $Q$ be any subgroup of $P$ properly containing $Z$.
        Since $(M(\Delta Q),M(\Delta Q)^*)$ induces a Morita equivalence
        between $B_0(C_G(Q))$ and $B_0(C_{G'}(Q))$, we have that
        \[
            (M\otimes_{B'}M^*)(\Delta Q)
            \cong M(\Delta Q)\otimes_{B_{0}(C_{G'}(Q))}M(\Delta Q)^*
            \cong B_{0}(C_{G}(Q)).    
        \]
        On the other hand, we have that
        \[
            (M\otimes_{B'}M^*)(\Delta Q)\cong B_{0}(C_{G}(Q))\oplus X(\Delta Q).
        \]
        Hence we see that $X(\Delta Q)=0$,
        and in particular, $X'(\Delta Q)=0$.
        Finally, we have that $X$ is $\Delta Z$-projective.
        Similarly, if we write $M^{*}\otimes_{B}M\cong B'\oplus Y$,
        then $Y$ is $\Delta Z$-projective.
    \end{proof}

    Let $B$ be a block of $G$ with defect group $P$,
    and $S$ a simple $kG$-module lying in $B$.
    Then $P\cap Z(G)$ is contained in a vertex of $S$
    since $P\cap Z(G)$ acts trivially on $S$.
    The following lemma gives a condition for the vertex
    to be equal to $P\cap Z(G)$.

    \begin{lemma}\label{generalization-equiv-cond-block-simple}
        Let $B$ be a block of $kG$ with defect group $P$.
        Then the following are equivalent:
        \begin{enumerate}[label={$(\mathrm{\roman*})$}]
            \item The block $B$ has a simple module with vertex $P\cap Z(G)$.
            \item $P\leq Z(G)$.
        \end{enumerate}
        Moreover, if these conditions hold, then $B$ has a unique simple module.
    \end{lemma}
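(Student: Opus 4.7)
The plan is to use Kn\"{o}rr's theorem on vertices of simple modules in both directions: for any simple $B$-module with vertex $V$ chosen, up to $G$-conjugacy, inside the defect group $P$, one has $C_{P}(V)\leq V$.

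For $(\mathrm{i})\Rightarrow(\mathrm{ii})$, I would take a simple $B$-module whose vertex is conjugate to $V=P\cap Z(G)$. Since $V\leq Z(G)$, every element of $P$ centralizes $V$, so $C_{P}(V)=P$. Kn\"{o}rr's theorem then forces $P=C_{P}(V)\leq V=P\cap Z(G)$, so $P\leq Z(G)$.

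For the converse together with the uniqueness assertion, assume $P\leq Z(G)$. Then $kP\subseteq Z(kG)$ is a local subalgebra with nilpotent maximal ideal $J(kP)$, so $I:=J(kP)\cdot kG$ is a nilpotent ideal of $kG$ with $kG/I\cong k(G/P)$. Since idempotents lift uniquely through nilpotent ideals, the primitive central idempotents of $kG$ correspond bijectively with those of $k(G/P)$, and $B$ corresponds to a block $\bar{B}$ of $k(G/P)$. Because $P$ is a normal $p$-subgroup contained in the defect group $P$ of $B$, the standard description of defect groups under quotients gives that $\bar{B}$ has defect group $P/P=1$; hence $\bar{B}\cong\mathrm{Mat}_{n}(k)$ has a unique simple module. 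Inflation then yields that $B$ has a unique simple module $S$, which, being a summand of the inflation of the regular $k(G/P)$-module, is a summand of the permutation $kG$-module $k_{P}{\uparrow}^{G}$, and hence is $P$-projective. Its vertex $V$ therefore lies inside $P$, and since $P\leq Z(G)$ is abelian, $C_{P}(V)=P$, so Kn\"{o}rr again yields $V=P=P\cap Z(G)$.

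The main obstacle is the passage from $kG$ to $k(G/P)$ and the identification of $\bar{B}$ as a matrix algebra; once this is in hand, Kn\"{o}rr's theorem cleanly does the vertex bookkeeping in both directions, and uniqueness of the simple module follows immediately from $\bar{B}$ being semisimple.
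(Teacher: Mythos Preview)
Your proof is correct and takes a genuinely different route from the paper. For $(\mathrm{i})\Rightarrow(\mathrm{ii})$, the paper argues via block domination: the simple module $S$ with vertex $Z=P\cap Z(G)$ is projective as a $k[G/Z]$-module, hence lies in a defect-zero block $\bar{B}$ of $k[G/Z]$; one then checks that $B$ dominates $\bar{B}$, that $\bar{B}$ is the unique block dominated by $B$, and invokes \cite[Chapter~5, Theorem~8.10]{Nagao-Tsushima-book-1989} to conclude $P/Z=1$. Your appeal to Kn\"orr's theorem is more direct and bypasses the domination machinery entirely. Conversely, for $(\mathrm{ii})\Rightarrow(\mathrm{i})$ the paper's argument is shorter: once $P\leq Z(G)$, the normal $p$-subgroup $P$ acts trivially on every simple $B$-module, so each such module has vertex containing $P$ and hence equal to $P=P\cap Z(G)$; your detour through the quotient $k(G/P)$ and a second application of Kn\"orr is valid but heavier than necessary. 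For the uniqueness assertion both arguments ultimately reduce to a defect-zero block of a quotient and so converge. One small point worth making explicit: Kn\"orr's theorem furnishes \emph{some} defect group $D$ with $C_{D}(V)\leq V\leq D$, not the fixed $P$ you write; in your applications this is harmless because $V=P\cap Z(G)$ is central (so $C_{D}(V)=D$ for any defect group $D$, forcing $D=V$ and hence $|P|=|V|$), but the phrasing should reflect this.
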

    \begin{proof}
        $(\mathrm{ii})\Rightarrow(\mathrm{i})$:
        Since $P$ is a normal $p$-subgroup of $G$,
        it follows that $P$ acts trivially on any simple $B$-module $S$.
        Hence by \cite[Chapter 4, Theorem 7.8 (i)]{Nagao-Tsushima-book-1989},
        that $S$ has a vertex containing $P$.
        This implies (i).

        $(\mathrm{i})\Rightarrow(\mathrm{ii})$:
        Let $Z=P\cap Z(G)$ and $S$ a simple $B$-module with vertex $Z$.
        Hence $S$ is projective as a $k[G/Z]$-module.
        Let $\bar{B}$ be the block of defect zero of $k[G/Z]$
        in which $S$ lies.
        Then $\bar{B}$ has a unique irreducible character,
        and it lies in $B$ when viewed as a character of $G$.
        Hence we see from \cite[Chapter 5, Lemma 8.6 (ii)]{Nagao-Tsushima-book-1989}
        that $B$ dominates $\bar{B}$.
        Also, by \cite[Chapter 5, Theorem 8.11]{Nagao-Tsushima-book-1989},
        $\bar{B}$ is a unique block of $k[G/Z]$ dominated by $B$.
        It follows from \cite[Chapter 5, Theorem 8.10]{Nagao-Tsushima-book-1989} that $P/Z=1$.
        Hence (ii) follows.
        
        Suppose that the conditions hold.
        In the argument above,
        since $\bar{B}$ has a unique simple module,
        so does $B$ by \cite[Chapter 5, Theorem 8.11]{Nagao-Tsushima-book-1989}.
    \end{proof}

    The following proposition is a key result for the proof of
    \cref{generalization-Linckelmann}.
    This is a generalization of \cite[Proposition 2.3]{Linckelmann-1996}
    to projectivity relative to a central $p$-subgroup under certain conditions.
    \begin{proposition}\label{generalization-Linckelmann-proposition}
        Let $G$ and $G'$ be finite groups
        with a common $p$-subgroup $P$,
        and $M$ a trivial source $k[G\times G']$-module with vertex $\Delta P$.
        Assume that $Z$ is a proper subgroup of $P$ contained in $Z(G)$.
        Then $\mathrm{soc}(kG)\otimes_{kG}M$ is a nonzero $kG'$-module
        having no nonzero $Z$-projective summand.
        In particular,
        for any simple $kG$-module $S$,
        the $kG'$-module $S\otimes_{kG}M$ has no nonzero $Z$-projective summand.
    \end{proposition}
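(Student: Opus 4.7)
The plan is first to reduce the problem to the case where $\mathrm{soc}(kG)$ is replaced by a single simple right $kG$-module $S$. Since $kG$ is self-injective, every simple right $kG$-module occurs as a direct summand of $\mathrm{soc}(kG)$, so it suffices to show that $S\otimes_{kG}M$ has no nonzero $Z$-projective direct summand for each such $S$, together with the non-vanishing of $\mathrm{soc}(kG)\otimes_{kG}M$. For the latter, choose any simple left $kG$-quotient $T$ of $M$ (which exists because $M\neq 0$); the induced surjection $T^{*}\otimes_{kG}M\twoheadrightarrow T^{*}\otimes_{kG}T\cong k$ (Schur's lemma) shows that $T^{*}\otimes_{kG}M\neq 0$, and since $T^{*}$ is a simple right $kG$-module and hence a summand of $\mathrm{soc}(kG)$, we conclude $\mathrm{soc}(kG)\otimes_{kG}M\neq 0$.

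Next, I would present $S\otimes_{kG}M$ concretely. Since $M$ is a trivial source $k[G\times G']$-module with vertex $\Delta P$, it is a direct summand of $k{\uparrow}_{\Delta P}^{G\times G'}$, which is isomorphic to $kG\otimes_{kP}kG'$ as a $(kG,kG')$-bimodule. Tensoring over $kG$ yields
\[
  S\otimes_{kG}(kG\otimes_{kP}kG')\;\cong\; S{\downarrow}_{P}^{G}{\uparrow}_{P}^{G'},
\]
so $S\otimes_{kG}M$ is a direct summand of $S{\downarrow}_{P}^{G}{\uparrow}_{P}^{G'}$. The claim is therefore reduced to showing that $S{\downarrow}_{P}^{G}{\uparrow}_{P}^{G'}$ has no nonzero $Z$-projective direct summand as a $kG'$-module. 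A crucial observation is that $Z\leq Z(G)$ is an abelian $p$-group and $k^{\times}$ contains no nontrivial $p$-power roots of unity, so by Schur's lemma $Z$ acts trivially on every simple $kG$-module, and in particular on $S{\downarrow}_{P}^{G}$.

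The main step, and main obstacle, is then to show: if $V$ is a $kP$-module on which a proper subgroup $Z<P$ (normal in $P$ because $Z\leq Z(G)$) acts trivially, then $V{\uparrow}_{P}^{G'}$ has no nonzero direct summand whose vertex is $G'$-conjugate to a subgroup of $Z$. I would argue by contradiction: assume $W$ is an indecomposable summand of $V{\uparrow}_{P}^{G'}$ with vertex $Q$ satisfying $Q\leq_{G'}Z$, and after replacing $Q$ by a $G'$-conjugate, assume $Q\leq Z$. The main tool is the Brauer construction at $Q$: Brou\'e's formula computes $(V{\uparrow}_{P}^{G'})(Q)$ as a direct sum indexed by certain double cosets, whose $g=1$ contribution is $V(Q){\uparrow}_{N_{P}(Q)}^{N_{G'}(Q)}$; since $Q\leq Z$ acts trivially on $V$ and the traces from proper subgroups of $Q$ vanish in characteristic $p$, we have $V(Q)\cong V$. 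Combining this computation with the vertex-source theory of $W$ via Green correspondence in $G'$, one should see that the triviality of the $Z$-action on $V$ together with the strict inclusion $Z<P$ is incompatible with $W$ having its vertex inside $Z$. The technical heart of the proof lies in controlling the other terms of the Brou\'e/Mackey decomposition and tracking sources through Green correspondence; this is where I expect the argument to require real care.
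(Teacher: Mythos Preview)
Your reduction to the claim that $S{\downarrow}_{P}^{G}{\uparrow}_{P}^{G'}$ has no nonzero $Z$-projective summand is incorrect: this larger module \emph{does} in general have $Z$-projective summands, so the ``main step'' you propose is simply false. Concretely, take any indecomposable $kP$-module $V$ on which the central subgroup $Z$ acts trivially and which is projective as a $k[P/Z]$-module (equivalently, $V$ has vertex exactly $Z$); then $V{\uparrow}_{P}^{G'}$ is $Z$-projective by transitivity of relative projectivity. Such $V$ actually occur as summands of $S{\downarrow}_{P}$ for simple $S$: in the paper's own example (Section~6), with $G=SL_{2}(11)$, $P\cong Q_{8}$, $Z\cong C_{2}$, one has $S_{i}{\downarrow}_{P}\cong k_{P}\oplus V_{i}$ where $V_{i}$ is indecomposable with vertex $Z$, and the paper explicitly uses that $V_{i}{\uparrow}^{G'}$ is $Z$-projective. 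Thus $S_{i}{\downarrow}_{P}{\uparrow}^{G'}$ has a nonzero $Z$-projective summand, yet $S_{i}\otimes_{kG}M$ does not. The point is that passing from $M$ to the ambient module $kG\otimes_{kP}kG'$ discards exactly the information that singles out $S\otimes_{kG}M$ among the summands; your Brauer-construction/Green-correspondence plan cannot recover it, because the statement you are trying to prove at that stage is already false.

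The paper avoids this by never leaving the bimodule $M$ itself. It observes that $\Delta Z$ acts trivially on $M$, so $M$ is a $k[G\times G'/\Delta Z]$-module with vertex $\Delta P/\Delta Z\neq 1$; hence $M$ has no projective summand over $k[G\times G'/\Delta Z]\cong kG^{\mathrm{op}}\otimes_{kZ}kG'$, which gives $M\cdot\mathrm{soc}(kG^{\mathrm{op}}\otimes_{kZ}kG')=0$. A direct computation then shows that $\mathrm{soc}(kG^{\mathrm{op}})\otimes_{kZ}\pi_{Z}^{-1}(\mathrm{soc}(k[G'/Z]))$ lies in this socle, and unwinding yields $(\mathrm{soc}(kG)\otimes_{kG}M)\cdot\mathrm{soc}(k[G'/Z])=0$, i.e.\ no projective $k[G'/Z]$-summand, hence no $Z$-projective $kG'$-summand. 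The essential difference is that the paper exploits the full $k[G\times G']$-module structure of $M$ and the hypothesis $Z\lneq P$ via the nontriviality of $\Delta P/\Delta Z$, whereas your reduction throws this away at the first step.
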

    \begin{proof}
        Since $Z$ is a normal $p$-subgroup of $G$,
        it follows that $Z$ acts trivially on any simple $kG$-module.
        Also, it follows that $mz=zm$ for any element $m\in M$ and $z\in Z$
        since $M$ is a direct summand of $kG\otimes_{kP}kG'$.
        Hence $Z$ acts trivially on soc$(kG)\otimes_{kG}M$.
        This implies that any indecomposable summand of soc$(kG)\otimes_{kG}M$
        has a vertex containing $Z$
        (see \cite[Chapter 4, Theorem 7.8 (i)]{Nagao-Tsushima-book-1989}).
        Therefore if we can show that
        $\mathrm{soc}(kG)\otimes_{kG}M$ has 
        no nonzero projective summand as a $k[G'/Z]$-module,
        then the result follows.
        Let $\pi_{Z}:kG'\rightarrow k[G'/Z]$ be
        the canonical algebra homomorphism.
        Then this is equivalent to saying that
        $(\mathrm{soc}(kG)\otimes_{kG}M)\mathrm{soc}(k[G'/Z])=0$
        by \cite[Proposition 4.11.7]{Linckelmann-book-2018-vol1},
        and we have the isomorphisms
        \begin{align*}
            (\mathrm{soc}(kG)\otimes_{kG}M)\mathrm{soc}(k[G'/Z])
            &\cong(\mathrm{soc}(kG)M)\pi_{Z}^{-1}(\mathrm{soc}(k[G'/Z]))\\
            &\cong M\mathrm{soc}(kG^{\mathrm{op}})\otimes\pi_{Z}^{-1}(\mathrm{soc}(k[G'/Z])),
        \end{align*}
        where $kG^{\mathrm{op}}$ is the opposite algebra of $kG$.
        Hence we show that
        \[
            M\mathrm{soc}(kG^{\mathrm{op}})\otimes\pi_{Z}^{-1}(\mathrm{soc}(k[G'/Z]))=0.  
        \]

        Since $\Delta Z$ acts trivially on $M$,
        it follows that $M$ can be viewed as a $k[G\times G'/\Delta Z]$-module
        with vertex $\Delta P/\Delta Z$.
        We may consider $kG^{\mathrm{op}}\otimes_{kZ}kG'$ as a $k$-algebra
        since $kG^{\mathrm{op}}$ and $kG'$ are algebras over the commutative ring $kZ$.
        Let $\theta:kG^{\mathrm{op}}\otimes_{kZ}kG'\rightarrow k[G\times G'/\Delta Z]$
        be a $k$-algebra homomorphism given by
        $\theta(g^{\mathrm{op}}\otimes g')=(g^{-1},g')\Delta Z$.
        Then $\theta$ is an isomorphism.
        Note here that the opposite of $kG$ in the definition of $\theta$
        is necessary to make $\theta$ an algebra homomorphism.
        We can consider the following commutative diagram of algebras
        \[
            \begin{tikzcd}
                kG^{\mathrm{op}}\otimes kG'\arrow[r,"\sim"]\arrow[d,"\pi"']
                & k[G\times G']\arrow[r]\arrow[d,"\pi_{\Delta Z}"]
                & \mathrm{End}_{k}(M)\\
                kG^{\mathrm{op}}\otimes_{kZ}kG'\arrow[r,"\sim","\theta"']
                & k[G\times G'/\Delta Z]\arrow[ru]
                % 0 \arrow[r] & U_{2}\arrow[r,"f_{2}"] & U_{3}\arrow[r,"f_{3}"] & \Omega_{W}^{-1}(U_{1})\arrow[r] & 0
            \end{tikzcd},
        \]
        where $\pi$ is a surjective algebra homomorphism given by
        $\pi(g^{\mathrm{op}}\otimes g')=g^{\mathrm{op}}\otimes g'$,
        the top horizontal map in the square is an isomorphism that
        maps $g^{\mathrm{op}}\otimes g'$ to $g^{-1}\otimes g'$,
        and $\pi_{\Delta Z}$ is the canonical algebra homomorphism.
        Since $P$ is a vertex of $M$, and $\Delta P/\Delta Z$ is nontrivial,
        it follows that $M\mathrm{soc}(k[G\times G'/\Delta Z])=0$.
        Therefore to show that
        $M\mathrm{soc}(kG^{\mathrm{op}})\otimes\nobreak\pi_{Z}^{-1}(\mathrm{soc}(k[G'/Z]))=0$,
        it suffices to show that
        \[
            \pi(\mathrm{soc}(kG^{\mathrm{op}})\otimes\pi_{Z}^{-1}(\mathrm{soc}(k[G'/Z])))
            =\mathrm{soc}(kG^{\mathrm{op}})\otimes_{kZ}\pi_{Z}^{-1}(\mathrm{soc}(k[G'/Z]))
        \]
        is contained in $\theta^{-1}(\mathrm{soc}(k[G\times G'/\Delta Z]))
        =\mathrm{soc}(kG^{\mathrm{op}}\otimes_{kZ}kG')$.

        Since $\Delta Z$ is a normal $p$-subgroup of $G\times G'$,        
        it follows that $\pi_{\Delta Z}(J(k[G\times G']))=\linebreak J(k[G\times G'/\Delta Z])$,
        and hence by the diagram above,
        $\pi(J(kG^{\mathrm{op}}\otimes kG'))=J(kG^{\mathrm{op}}\otimes_{kZ}kG')$.
        We see that
        \begin{align*}
            J(kG^{\mathrm{op}}\otimes_{kZ}kG')
            &=\pi(J(kG^{\mathrm{op}}\otimes kG'))\\
            &=\pi(J(kG^{\mathrm{op}})\otimes kG'+kG^{\mathrm{op}}\otimes J(kG'))\\
            &=J(kG^{\mathrm{op}})\otimes_{kZ}kG'+kG^{\mathrm{op}}\otimes_{kZ}J(kG').
        \end{align*}
        Let $x^{\mathrm{op}}\otimes y\in
        \mathrm{soc}(kG^{\mathrm{op}})\otimes_{kZ}\pi_{Z}^{-1}(\mathrm{soc}(k[G'/Z]))$.
        For $\alpha^{\mathrm{op}}\otimes\beta'\in J(kG^{\mathrm{op}})\otimes_{kZ}kG'$,
        it follows that $(x^{\mathrm{op}}\otimes y)\alpha^{\mathrm{op}}\otimes\beta'=0$
        as $x^{\mathrm{op}}\in \mathrm{soc}(kG^{\mathrm{op}})$
        and $\alpha^{\mathrm{op}}\in J(kG^{\mathrm{op}})$.
        Let $\beta^{\mathrm{op}}\otimes\alpha'\in kG^{\mathrm{op}}\otimes_{kZ}J(kG')$,
        and let us write $y\alpha'=\sum_{t\in[G'/Z]}(\sum_{z\in Z}\lambda_{tz}z)t$.
        Similarly, it follows that $\pi_{Z}(y\alpha')=\pi_{Z}(y)\cdot\pi_{Z}(\alpha')=0$,
        and hence that $\sum_{z\in Z}\lambda_{tz}=0$ for any $t\in[G'/Z]$.
        Hence it follows that
        \begin{align*}
            (x^{\mathrm{op}}\otimes y)\cdot \beta^{\mathrm{op}}\otimes\alpha'
            &=(\beta x)^{\mathrm{op}}\otimes y\alpha'\\
            &=(\beta x)^{\mathrm{op}}\otimes \sum_{t\in[G'/Z]}(\sum_{z\in Z}\lambda_{tz}z)t\\
            &=\sum_{t\in[G'/Z]}(\beta x)^{\mathrm{op}}(\sum_{z\in Z}\lambda_{tz}z)\otimes t\\
            &=\sum_{t\in[G'/Z]}(\beta x)^{\mathrm{op}}(\sum_{z\in Z}\lambda_{tz})\otimes t\\
            &=0,
        \end{align*}
        where the second equality from the last holds
        as $Z$ acts trivially on $\mathrm{soc}(kG^{\mathrm{op}})$.
        Thus\linebreak$\mathrm{soc}(kG^{\mathrm{op}})\otimes_{kZ}\pi_{Z}^{-1}(\mathrm{soc}(k[G'/Z]))$
        is annihilated by $J(kG^{\mathrm{op}}\otimes_{kZ}kG')$,
        and the result follows.
    \end{proof}

    We now prove \cref{generalization-Linckelmann}.
    It can be proved by an argument similar to that in
    \cite[Remark 2.7]{Linckelmann-1996}
    by virtue of \cref{generalization-Linckelmann-proposition}.
    \begin{proof}[Proof of \cref{generalization-Linckelmann}]
        We write
        \[
            M\otimes_{B'}M^{*}\cong B\oplus X
            \text{ and }
            M^{*}\otimes_{B}M\cong B'\oplus Y,
        \]
        where $X$ is $Q\times Q$-projective as a $k[G\times G]$-module,
        and $Y$ is $Q\times Q$-projective as a $k[G'\times G']$-module.
        
        (i) Let $M=M_{1}\oplus M_{2}$,
        where $M_{1}$ and $M_{2}$ are $B$-$B'$-bimodules.
        Then we have that
        \[
            B\oplus X
            =(M_{1}\oplus M_{2})\otimes_{B'}M^{*}
            \cong (M_{1}\otimes_{B'}M^{*})\oplus (M_{2}\otimes_{B'}M^{*}).
        \]
        We may consider that $B$ is a direct summand of $M_{1}\otimes_{B'}M^{*}$.
        Then $M_{2}\otimes_{B'}M^{*}$ is $Q\times Q$-projective.
        Since $M_{2}\otimes_{B'}M^{*}$ is $Q$-projective as a right $kG$-module,
        it follows from
        \cref{basic-properties-Delta-projective-p-perm-tensor-functor} (i),
        $M_{2}\otimes_{B'}M^{*}\otimes_{B}M$ is $Q$-projective
        as a right $kG$-module.
        Hence $M_{2}\otimes_{B'}M^{*}\otimes_{B}M$ is $Q\times Q$-projective.
        We see that
        \[
            M_{2}\otimes_{B'}M^{*}\otimes_{B}M
            = M_{2}\otimes_{B'}(B'\oplus Y)
            \cong M_{2}\oplus (M_{2}\otimes_{B'}Y).
        \]
        Hence $M_{2}$ is $Q\times Q$-projective.

        (ii) Let $S$ be any simple $B$-module.
        First note that $B$ has no $Z$-projective simple module.
        Indeed, $S$ has a vertex containing $Z(G)\cap P$.
        Hence if $Z<Z(G)\cap P$, then clearly $S$ is not $Z$-projective.
        If $Z=Z(G)\cap P$, then $Z(G)\cap P\neq P$ by the assumption on $Z$,
        and hence $S$ is not $Z$-projective by \cref{generalization-equiv-cond-block-simple}.
        
        By the remark above, and the fact that
        $(M,M^{*})$ induces a $Z$-stable equivalence of Morita type,
        we can write $S\otimes_{B}M=V\oplus Y$,
        where $V$ is an indecomposable non $Z$-projective $B'$-module,
        and $Y$ is a $Z$-projective module.
        However, by \cref{generalization-Linckelmann-proposition},
        $S\otimes_{B}M$ has no nonzero $Z$-projective summand,
        and hence $Y=0$.

        (iii) Suppose that $S\otimes_{B}M$ is simple
        for any simple $B$-module $S$.
        It suffices to show that $X=0$
        (see \cite[Theorem 2.1]{rickard-1996},
        and the proof of \cite[Theorem 4.14.10]{Linckelmann-book-2018-vol1}).
        Since $S\otimes_{B}M$ is simple, it follows from (ii) that
        $S\otimes_{B}M\otimes_{B'}M^{*}$ is indecomposable and
        non $Z$-projective.
        On the other hand, we see that
        \[
            S\otimes_{B}M\otimes_{B'}M^{*}
            \cong S\otimes_{B}(B\oplus X)
            \cong S\oplus (S\otimes_{B}X).
        \]
        Hence $S\otimes_{B}X=0$.
        Since $X$ is projective as a $B$-module,
        it follows that $0=S\otimes_{B}X\cong \mathrm{Hom}_{B}(X^{*},S)$.
        This forces $X=0$.
    \end{proof}

    \section{Proof of \cref{generalization-Ishioka-Kunugi}}
    
    In this section, we define relative Brauer indecomposability,
    and prove \cref{generalization-Ishioka-Kunugi}.

    In \cite{Kessar-Kunugi-Mitsuhashi-2011},
    the notion of Brauer indecomposability was introduced.
    If finite groups $G$ and $G'$ have a common Sylow $p$-subgroup $P$,
    and $M=S(G\times G',\Delta P)$,
    then in order to apply Brou\'{e}'s method
    \cite[Theorem 6.3]{broue-1994},
    $M(\Delta Q)$ must be indecomposable
    as a $B_{0}(C_{G}(Q))$-$B_{0}(C_{G'}(Q))$-bimodule
    for any nontrivial subgroup $Q$ of $P$.
    This means that $M$ must be Brauer indecomposable.
    On the other hand,
    if $P$ has a subgroup $Z$ central in $G$ and $G'$,
    then in order to apply \cref{generalization-Broue-gluing},
    $M(\Delta Q)$ must be indecomposable
    as a $B_{0}(C_{G}(Q))$-$B_{0}(C_{G'}(Q))$-bimodule
    only for any nontrivial subgroup $Q$ of $P$ properly containing $Z$.
    Hence we need not know $M$ to be Brauer indecomposable.
    Therefore we define relative Brauer indecomposability:

    \begin{definition}\label{def-relative-brauer-indecomposable}
        Let $M$ be a $kG$-module and $R$ a $p$-subgroup of $G$.
        We say that $M$ is relatively $R$-Brauer indecomposable
        if for any $p$-subgroup $Q$ of $G$ containing $R$,
        the Brauer construction $M(Q)$ is indecomposable $($or zero$)$
        as a $kQC_{G}(Q)$-module.
    \end{definition}

    \begin{remark}\label{remark-Brauer-construction-conjugate}
        Let $M$ be a $kG$-module.
        For any $g\in G$, we have that
        \begin{align*}
            (M(Q){\downarrow}_{QC_{G}(Q)}^{N_{G}(Q)})^{g}
            \cong
            M(Q)^{g}{\downarrow}_{(QC_{G}(Q))^{g}}^{N_{G}(Q)^{g}}
            \cong
            M(Q^{g}){\downarrow}_{Q^{g}C_{G}(Q^{g})}^{N_{G}(Q^{g})}.
        \end{align*}
        Hence $M(Q){\downarrow}_{QC_{G}(Q)}^{N_{G}(Q)}$ is indecomposable
        if and only if $M(Q^{g}){\downarrow}_{Q^{g}C_{G}(Q^{g})}^{N_{G}(Q^{g})}$
        is indecomposable.
    \end{remark}

    Let $M$ be a $kG$-module and $R$ a $p$-subgroup of $G$.
    It follows from the definition that if $M$ is Brauer indecomposable then
    $M$ is relatively $R$-Brauer indecomposable for any $p$-subgroup $R$ of $G$.
    In particular, the relative $1$-Brauer indecomposability is just
    the Brauer indecomposability.
    Moreover, if $R'$ is a $p$-subgroup of $G$ conjugate with $R$,
    then by \cref{remark-Brauer-construction-conjugate},
    $M$ is relatively $R$-Brauer indecomposable
    if and only if $M$ is relatively $R'$-Brauer indecomposable.

    We restate the definition of relative Brauer indecomposability
    for indecomposable $kG$-modules:
    \begin{lemma}\label{relative-Brauer-construction-indecomposable-characterization}
        Let $M$ be an indecomposable $kG$-module with vertex $P$,
        and $R$ a $p$-subgroup of $G$.
        Then the following are equivalent:
        \begin{enumerate}[label={$(\mathrm{\roman*})$}]
            \item The module $M$ is relatively $R$-Brauer indecomposable.
            \item The Brauer construction $M(Q)$ is indecomposable
                as a $kQC_{G}(Q)$-module or zero
                for any subgroup $Q$ of $P$ with $R\leq_{G}Q$.
        \end{enumerate}
    \end{lemma}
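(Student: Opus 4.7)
The plan is to reduce (i) to the restricted setting of (ii) using that $M$ has a prescribed vertex $P$, together with the two remarks preceding the lemma. The direction (i)$\Rightarrow$(ii) should be almost immediate: any $Q\leq P$ with $R\leq_{G}Q$ contains some $G$-conjugate $R^{g}$ of $R$, and by the remark stating that relative $R$-Brauer indecomposability coincides with relative $R^{g}$-Brauer indecomposability (which itself follows from \cref{remark-Brauer-construction-conjugate}), the hypothesis (i) applied to $R^{g}$ yields that $M(Q)$ is indecomposable as a $kQC_{G}(Q)$-module.

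For (ii)$\Rightarrow$(i), I would take an arbitrary $p$-subgroup $Q$ of $G$ with $R\leq Q$ and show $M(Q)$ is indecomposable or zero as $kQC_{G}(Q)$-module. If $M(Q)=0$ there is nothing to prove, so assume $M(Q)\neq 0$. Here the key input is \cref{Brauer-construction-vertex}(i): since $M$ is indecomposable with vertex $P$, we must have $Q\leq P^{g}$ for some $g\in G$, hence $Q^{g^{-1}}\leq P$. Moreover, from $R\leq Q$ we get $R^{g^{-1}}\leq Q^{g^{-1}}$, so $R\leq_{G}Q^{g^{-1}}$. Thus $Q^{g^{-1}}$ falls within the scope of hypothesis (ii), which yields that $M(Q^{g^{-1}})$ is indecomposable as a $kQ^{g^{-1}}C_{G}(Q^{g^{-1}})$-module. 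Finally, applying \cref{remark-Brauer-construction-conjugate} (the isomorphism $(M(Q){\downarrow}_{QC_{G}(Q)}^{N_{G}(Q)})^{g^{-1}}\cong M(Q^{g^{-1}}){\downarrow}_{Q^{g^{-1}}C_{G}(Q^{g^{-1}})}^{N_{G}(Q^{g^{-1}})}$, and the fact that conjugation by $g^{-1}$ is a $k$-linear isomorphism of module categories) transports indecomposability back to $M(Q)$ as a $kQC_{G}(Q)$-module, completing the proof.

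There is no serious obstacle here; the proof is essentially a bookkeeping exercise combining the vertex control coming from $M$ being indecomposable and the two observations about conjugation and $G$-conjugacy of $R$ already collected before the lemma. The only point that deserves care is to keep track of whether $M(Q)$ is being viewed as a $kN_{G}(Q)$-module or restricted to $kQC_{G}(Q)$ — I should state explicitly that we are using the $QC_{G}(Q)$-restriction throughout so that \cref{remark-Brauer-construction-conjugate} applies verbatim.
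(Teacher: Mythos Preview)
Your proposal is correct and follows essentially the same approach as the paper: both directions use \cref{Brauer-construction-vertex}(i) to reduce to subgroups $G$-conjugate into $P$, and \cref{remark-Brauer-construction-conjugate} (together with its consequence about replacing $R$ by a conjugate) to transport indecomposability along conjugation. The only cosmetic difference is that for (i)$\Rightarrow$(ii) the paper conjugates $Q$ directly via \cref{remark-Brauer-construction-conjugate}, whereas you conjugate $R$ and invoke the equivalence of relative $R$- and relative $R^{g}$-Brauer indecomposability; these amount to the same thing.
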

    \begin{proof}
        $(\mathrm{i})\Rightarrow(\mathrm{ii})$:
        This is immediate
        by \cref{remark-Brauer-construction-conjugate}.
        
        $(\mathrm{ii})\Rightarrow(\mathrm{i})$:
        Let $Q$ be any $p$-subgroup of $G$ containing $R$.
        By \cref{Brauer-construction-vertex},
        if $Q\nleq_{G}P$, then $M(Q)=0$.
        Hence we may assume that $Q\leq_{G}P$.
        Then we have that $R^{g}\leq Q^{g}\leq P$ for some $g\in G$,
        and hence that $R\leq_{G}Q^{g}\leq P$.
        Since $M(Q^{g}){\downarrow}_{Q^{g}C_{G}(Q^{g})}$ is indecomposable
        or zero, so is $M(Q){\downarrow}_{QC_{G}(Q)}$
        by \cref{remark-Brauer-construction-conjugate}.

    \end{proof}    
    
    We use the following lemmas
    in the proof of \cref{generalization-Ishioka-Kunugi}.
    \begin{lemma}\label{on-vertices-of-brauer-construction}
        Let $M$ be a trivial source $kG$-module with vertex $P$.
        If $Q$ is a $p$-subgroup of $G$ such that $Q<_{G}P$,
        then any indecomposable summand of $M(Q)$ has a vertex $R$
        such that $Q\vartriangleleft R\leq_{N_{G}(Q)}N_{P^{t}}(Q)$ for some $t\in G$.
        In particular, if $Q$ is fully $\mathcal{F}_{P}(G)$-normalized subgroup of $P$,
        then it follows that $Q\vartriangleleft R\leq_{N_{G}(Q)}N_{P}(Q)$.
    \end{lemma}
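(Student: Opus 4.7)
The plan is to exploit the trivial-source hypothesis to reduce the problem to computing $(k_P\uparrow^G)(Q)$ by a Mackey-type formula, and then read off the vertex data. Since $M$ is a trivial source $kG$-module with vertex $P$, it is a direct summand of $k_P\uparrow^G$, whence $M(Q)$ is a direct summand of $(k_P\uparrow^G)(Q)$. The standard decomposition of the Brauer construction of an induced trivial module (see e.g.~\cite{broue-1985} or \cite[Section 27]{Thevenaz-book-1995}) gives
\[
    (k_P\uparrow^G)(Q)\;\cong\;\bigoplus_{t}k_{N_{P^t}(Q)}\uparrow^{N_G(Q)},
\]
the sum running over a set of double coset representatives $t\in [N_G(Q)\backslash G/P]$ with $Q\leq P^t$. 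Therefore every indecomposable summand $V$ of $M(Q)$ is a summand of some $k_{N_{P^t}(Q)}\uparrow^{N_G(Q)}$, and the standard bound on vertices of summands of induced trivial modules yields $R\leq_{N_G(Q)}N_{P^t}(Q)$ for the vertex $R$ of $V$.

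Next I would verify $Q\leq R$ and its normality: since $Q\leq P^t$ and $Q$ normalizes itself we have $Q\leq N_{P^t}(Q)$, and $Q\unlhd N_G(Q)$ then forces $Q$ to act trivially on $k_{N_{P^t}(Q)}\uparrow^{N_G(Q)}$, hence on $V$; so $Q$ lies in every vertex of $V$, and $Q\unlhd R$ follows from $R\leq N_G(Q)$. For the strict containment $Q<R$ I would invoke the Brauer correspondence for indecomposable trivial source modules: $M(Q)$ is itself an indecomposable trivial source $kN_G(Q)$-module with vertex exactly $N_{P^t}(Q)$ for a suitable $t$ with $Q\leq P^t$. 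Since $Q<_G P$ gives $|Q|<|P|=|P^t|$, we have $Q<P^t$ as proper subgroups of a $p$-group, so $Q<N_{P^t}(Q)=R$ because proper subgroups of $p$-groups always have strictly larger normalizers. This yields $Q\vartriangleleft R$.

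For the ``in particular'' assertion, the hypothesis that $Q$ is fully $\mathcal{F}_P(G)$-normalized yields, via the routine comparison of $|N_P(Q^g)|$ over $g\in G$ with $Q^g\leq P$, that $N_P(Q)$ is a Sylow $p$-subgroup of $N_G(Q)$ (using that $P$ itself is Sylow in $G$). Sylow's theorem then embeds the $p$-subgroup $N_{P^t}(Q)$ into some $N_G(Q)$-conjugate of $N_P(Q)$, which combined with the first part gives $R\leq_{N_G(Q)}N_P(Q)$. The main obstacle is the strict inequality $Q<R$: the Mackey decomposition alone only delivers $R\leq_{N_G(Q)}N_{P^t}(Q)$, and excluding the degenerate case $R=Q$ (equivalently, $V$ being projective as a $k[N_G(Q)/Q]$-module) essentially requires pinning down the vertex of $M(Q)$ via the Brauer correspondence for trivial source modules; the remaining steps (Mackey decomposition, normality of $Q$ in $R$, and the reduction to $N_P(Q)$ in the fully normalized case) are routine applications of standard results.
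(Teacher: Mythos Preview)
Your argument for the strict containment $Q<R$ has a genuine gap. You claim that ``the Brauer correspondence for indecomposable trivial source modules'' forces $M(Q)$ itself to be an indecomposable $kN_G(Q)$-module with vertex $N_{P^t}(Q)$. This is not true in general: Brou\'e's bijection $M\mapsto M(P)$ between indecomposable $p$-permutation $kG$-modules with vertex $P$ and indecomposable projective $k[N_G(P)/P]$-modules operates only at the vertex $P$, not at an arbitrary $Q<_G P$. Whether $M(Q)$ is indecomposable for such $Q$ is precisely the Brauer indecomposability question that this paper is set up to study, and it can fail. So you have not excluded the possibility that some indecomposable summand $N$ of $M(Q)$ has vertex exactly $Q$.

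The paper closes this gap with the Burry--Carlson--Puig theorem rather than any indecomposability claim: one has $M(Q)\mid M{\downarrow}_{N_G(Q)}$, so each indecomposable summand $N$ of $M(Q)$ is a summand of $M{\downarrow}_{N_G(Q)}$; if $N$ had vertex $Q$, Burry--Carlson--Puig would force $Q$ to be a vertex of $M$, contradicting $Q<_G P$. This works summand by summand and needs nothing about the global structure of $M(Q)$. A smaller issue: in the ``in particular'' clause you assume $P$ is Sylow in $G$, which is not among the hypotheses (only that $P$ is a vertex of $M$). The paper instead uses that $Q$ is fully automized and receptive and appeals to \cite[Lemma~3.2]{Ishioka-Kunugi-2017} to obtain $N_{P^{tu}}(Q)\leq_{N_G(Q)}N_P(Q)$; your Sylow argument is a special case of this but does not cover the stated generality.
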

    \begin{proof}
        Let $Q$ be a $p$-subgroup of $G$ such that $Q<_{G}P$, and
        $N$ an indecomposable summand of $M(Q)$.
        Then it follows from \cite[Exercise 27.4 (b)]{Thevenaz-book-1995} that
        \[
            N\mid M(Q)
            \mid M{\downarrow}_{N_{G}(Q)}^{G}
            \mid k_{P}{\uparrow}^{G}{\downarrow}_{N_{G}(Q)}
            =\bigoplus_{t\in P\backslash G/N_{G}(Q)}
                k_{P^{t}\cap N_{G}(Q)}{\uparrow}^{N_{G}(Q)}.
        \]
        Hence we see, using \cite[Exercise 27.4 (a)]{Thevenaz-book-1995}, that
        $N$ has a vertex $R$ such that $Q\leq R\leq_{N_{G}(Q)}P^{t}\cap N_{G}(Q)$
        for some $t\in G$.
        This means that $Q$ is a normal subgroup of $R$.
        Assume that $R=Q$.
        Then the Burry-Carlson-Puig theorem
        (\cite[Theorem 4.4.6 (ii)]{Nagao-Tsushima-book-1989})
        implies that $M$ has vertex $Q$,
        which contradicts the assumption that $Q<_{G}P$.
        Hence we have that $Q\vartriangleleft R$.

        Suppose that $Q$ is fully $\mathcal{F}_{P}(G)$-normalized
        subgroup of $P$.
        Then it follows from \cref{important-properties-fusion-system} (ii)
        that $Q$ is fully automized and receptive.
        % and hence it follows from \cite[Lemma 3.2]{Ishioka-Kunugi-2017}
        % that $N_{P^{tu}}(Q)\leq_{N_{G}(Q)} N_{P}(Q)$.
        Also we have that $Q<R\leq P^{tu}$ for some $u\in N_{G}(Q)$.
        Hence we have, using \cite[Lemma 3.2]{Ishioka-Kunugi-2017}, that
        \[
            Q \vartriangleleft R\leq P^{tu}\cap N_{G}(Q)=N_{P^{tu}}(Q)\leq_{N_{G}(Q)} N_{P}(Q).
        \]
    \end{proof}

    \begin{lemma}\textnormal{(see the proof of \cite[Theorem 1.3]{Ishioka-Kunugi-2017})}
        \label{Scott-of-normalizer-is-direct-summand-of-Brauer-construction}
        Let $P$ be a $p$-subgroup of $G$ such that $\mathcal{F}_{P}(G)$ is saturated.
        If $Q$ is a fully normalized subgroup of $P$,
        then $S(N_{G}(Q),N_{P}(Q))$ is a direct summand of $S(G,P)(Q)$.
    \end{lemma}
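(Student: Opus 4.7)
The plan is to construct an explicit split monomorphism from $S(N_G(Q), N_P(Q))$ into $S(G,P)(Q)$ using the augmentation map of $k_P\uparrow^G$ and its image under the Brauer construction.

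First I would fix the decomposition $k_P\uparrow^G = S(G,P)\oplus N$; by the defining property of the Scott module, the complement $N$ has no copy of $k_G$ in its top. Hence the augmentation $\epsilon:k_P\uparrow^G\twoheadrightarrow k_G$ (sending each basis coset to $1$) annihilates $N$ and factors as $\epsilon = \epsilon'\circ\pi$, with $\pi:k_P\uparrow^G\twoheadrightarrow S(G,P)$ the projection and $\epsilon':S(G,P)\twoheadrightarrow k_G$ the induced surjection. Applying the additive Brauer functor at $Q$ yields $(k_P\uparrow^G)(Q) = S(G,P)(Q)\oplus N(Q)$ together with the factorization $\epsilon(Q) = \epsilon'(Q)\circ \pi(Q)$. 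A standard computation for the Brauer construction of a transitive permutation module gives
\[
    (k_P\uparrow^G)(Q) \cong \bigoplus_{g} k_{N_{{}^{g}P}(Q)}\uparrow^{N_G(Q)},
\]
where $g$ runs over $(N_G(Q),P)$-double-coset representatives in $\{g\in G : Q\leq {}^{g}P\}$. I denote by $U_1$ the summand corresponding to $g=1$, which is $k_{N_P(Q)}\uparrow^{N_G(Q)}$; on $U_1$, the map $\epsilon(Q)$ acts as the ordinary augmentation $U_1\twoheadrightarrow k_{N_G(Q)}$.

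Next I invoke the fusion-theoretic hypotheses. Since $\mathcal{F}_P(G)$ is saturated and $Q$ is fully normalized, $Q$ is fully automized and receptive by \cref{important-properties-fusion-system} (ii); combining these with the analogous Sylow statement for centralizers then gives, by a standard argument, that $N_P(Q)$ is a Sylow $p$-subgroup of $N_G(Q)$. In particular $[N_G(Q):N_P(Q)]$ is invertible in $k$, so $S(N_G(Q),N_P(Q)) \cong k_{N_G(Q)}$, embedded in $U_1$ as a direct summand by the norm map $\iota:1\mapsto\sum_{c\in [N_P(Q)\backslash N_G(Q)]} cN_P(Q)$.

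Finally I compute: the composition $\epsilon(Q)\circ\iota:k_{N_G(Q)}\to k_{N_G(Q)}$ sends $1$ to $[N_G(Q):N_P(Q)]\cdot 1$, a nonzero scalar in $k$. Hence $\epsilon(Q)\circ\iota$ is an isomorphism, which forces the composition $\pi(Q)\circ\iota:S(N_G(Q),N_P(Q))\to S(G,P)(Q)$ to be a split monomorphism, split by the appropriate scalar multiple of $\epsilon'(Q)$. This exhibits $S(N_G(Q),N_P(Q))$ as a direct summand of $S(G,P)(Q)$, as required. The main technical step is the Sylow claim for $N_P(Q)$, which is where the saturation and full-normalization hypotheses are essential; once that is in hand, the rest reduces to identifying the norm inclusion of $k_{N_G(Q)}$ in $U_1$ and a one-line scalar computation.
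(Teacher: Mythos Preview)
Your argument has a genuine gap at the claim that $N_P(Q)$ is a Sylow $p$-subgroup of $N_G(Q)$. That implication is standard when $P\in\mathrm{Syl}_p(G)$, but the lemma assumes only that $\mathcal{F}_P(G)$ is saturated, which is strictly weaker. For a counterexample, take $G=C_{p^2}$ and $P$ its unique subgroup of order $p$. Then $\mathcal{F}_P(G)$ is trivially saturated and $Q=P$ is fully normalized, yet $N_P(Q)=P$ has index $p$ in $N_G(Q)=G$. Here $S(N_G(Q),N_P(Q))=S(G,P)=k_P{\uparrow}^G$ is $p$-dimensional, not isomorphic to $k_{N_G(Q)}$, so your norm inclusion and the one-line scalar computation do not go through. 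What saturation and full normalization actually give you is that $\mathrm{Aut}_P(Q)=N_P(Q)/C_P(Q)$ is Sylow in $\mathrm{Aut}_{\mathcal{F}}(Q)=N_G(Q)/C_G(Q)$; this says nothing about the $p$-part of $|C_G(Q)|$, hence nothing about whether $N_P(Q)$ is Sylow in $N_G(Q)$.

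The first half of your setup is still useful: the factorization $\epsilon(Q)=\epsilon'(Q)\circ\pi(Q)$ and your explicit description of $\epsilon(Q)$ on $U_1$ do show that $\epsilon'(Q)\colon S(G,P)(Q)\to k_{N_G(Q)}$ is surjective, so some indecomposable summand of $S(G,P)(Q)$ is a Scott module $S(N_G(Q),R)$ for some vertex $R$. The missing step is to identify $R$ with $N_P(Q)$ up to $N_G(Q)$-conjugacy, and this is where the saturation hypothesis must really do work---for instance via the vertex bound of \cref{on-vertices-of-brauer-construction} together with an argument forcing some summand of $S(G,P)(Q)$ to have vertex exactly $N_P(Q)$. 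The paper does not spell this out but defers to \cite[proof of Theorem~1.3]{Ishioka-Kunugi-2017}.
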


    We now prove \cref{generalization-Ishioka-Kunugi}.
    Although the proof is essentially the same
    as that of \cite[Theorem 1.3]{Ishioka-Kunugi-2017},
    we show it for the convenience of the reader.
    
    \begin{proof}[Proof of \cref{generalization-Ishioka-Kunugi}]
        $(\mathrm{i})\Rightarrow(\mathrm{ii})$:
        This is immediate by
        \cref{Scott-of-normalizer-is-direct-summand-of-Brauer-construction}
        
        $(\mathrm{ii})\Rightarrow(\mathrm{i})$:
        By \cref{relative-Brauer-construction-indecomposable-characterization},
        it suffices to show that $M(Q){\downarrow}_{QC_{G}(Q)}$ is indecomposable
        for any subgroup $Q$ of $P$ with $R\leq_{G}Q$,
        and we show this by induction on the index $|P:Q|$.
        First, suppose that $Q=P$.
        It follows from \cite[Lemma 4.3]{Kessar-Kunugi-Mitsuhashi-2011} that
        $M(Q){\downarrow}_{QC_{G}(Q)}^{N_{G}(Q)}$ is indecomposable.
        Next, we consider the case $|P:Q|>1$,
        and assume that for any subgroup $Q'$ of $P$ with $R\leq_{G}Q'$,
        if $|P:Q'|<|P:Q|$,
        then $M(Q'){\downarrow}_{Q'C_{G}(Q')}^{N_{G}(Q')}$ is indecomposable.
        By \cref{remark-Brauer-construction-conjugate},
        we my consider without loss of generality that $Q$ is fully normalized.
        Hence it follows that $S(N_{G}(Q),N_{P}(Q))\mid M(Q)$
        by \cref{Scott-of-normalizer-is-direct-summand-of-Brauer-construction}.
        We write
        \[
            M(Q)=\bigoplus_{i=1}^{r}N_{i}  
        \]
        where $N_{1}=S(N_{G}(Q),N_{P}(Q))$
        and each $N_{i}$ is an indecomposable $kN_G(Q)$-module for $2\leq i \leq r$.
        Now suppose that $r\geq 2$ and $i$ is an integer so that $2\leq i\leq r$.
        Since $Q$ is fully normalized subgroup of $P$,
        it follows from \cref{on-vertices-of-brauer-construction} that
        $N_{i}$ has a vertex $Q'$ such that
        \[
            R\leq_{G} Q \vartriangleleft Q'\leq_{N_{G}(Q)} N_{P}(Q).
        \]
        Applying the Brauer construction with respect to $Q'$,
        we have that
        \[
            N_{1}(Q') \oplus N_{i}(Q')
            \mid
            (M{\downarrow}_{N_{G}(Q)}^{G})(Q')
            \cong
            M(Q'){\downarrow}_{N_{N_{G}(Q)}(Q')}^{N_{G}(Q)},
        \]
        where the isomorphism holds
        by the definition of the Brauer construction.
        Since $Q$ is a normal subgroup of $Q'$,
        it follows that $Q'C_{G}(Q')\leq N_{G}(Q)$.
        Hence we have the further restriction
        from $N_{N_{G}(Q)}(Q')=N_{G}(Q')\cap N_{G}(Q)$ to $Q'C_{G}(Q')$:
        \[
            N_{1}(Q'){\downarrow}_{Q'C_{G}(Q')}^{N_{G}(Q')\cap N_{G}(Q)}
            \oplus
            N_{i}(Q'){\downarrow}_{Q'C_{G}(Q')}^{N_{G}(Q')\cap N_{G}(Q)}
            \mid
            M(Q'){\downarrow}_{Q'C_{G}(Q')}^{N_{G}(Q')}.
        \]
        Since we have that $R\leq_{G}Q\vartriangleleft{Q'}^{u}\leq P$
        for some $u\in N_{G}(Q)$,
        it follows from the induction hypothesis that
        $M({Q'}^{u}){\downarrow}_{{Q'}^{u}C_{G}({Q'}^{u})}^{N_{G}({Q'}^{u})}$
        is indecomposable.
        Hence $M(Q'){\downarrow}_{Q'C_{G}(Q')}^{N_{G}(Q')}$ is indecomposable.
        However we see that $N_{1}(Q')\neq 0$ and $N_{i}(Q')\neq 0$
        since $N_{1}$ and $N_{i}$ have vertices
        containing a conjugate of $Q'$ in ${N_{G}(Q)}$.
        This is a contradiction.
        Therefore we see that $r=1$.
        Finally, by our hypothesis (ii), we have that
        \[
            M(Q){\downarrow}_{QC_{G}(Q)}^{N_{G}(Q)}
            \cong
            S(N_{G}(Q),N_{P}(Q)){\downarrow}_{QC_{G}(Q)}^{N_{G}(Q)}
        \]
        is indecomposable.
        This implies (i).

        The last assertion in the theorem has already been shown to hold
        in the argument above.
    \end{proof}

    \section{Example}

    Let $k$ be an algebraically closed field of characteristic $2$.
    Let $G=SL_{2}(11)$ and $G'=SL_{2}(3)$,
    and $B=B_{0}(G)$ and $B'=B_{0}(G')$.
    In this section, we show that $B$ and $B'$ are Morita equivalent
    by using our main theorems.

    First, we consider some subgroups of $G$ and $G'$.
    We see that $G$ and $G'$ have a common Sylow $2$-subgroup $P\cong Q_{8}$,
    the quaternion group of order $8$,
    such that $\mathcal{F}_{P}(G)=\mathcal{F}_{P}(G')$.
    We see that $Z(G)\cap P=Z(G')\cap P=:Z\cong C_{2}$.
    Let $Q_{1}$ be a cyclic subgroup of $P$ of order $4$.
    Then $P$, $Q_{1}$, and $Z$ are representatives
    of the $\mathcal{F}_{P}(G)$-conjugacy classes of
    the nontrivial subgroups of $P$.
    The centralizers of this subgroups in $G$ and $G'$ are the following:
    \begin{align}
        \label{ex-centralizers-in-G}
        C_{G}(P)=Z,\;
        C_{G}(Q_{1})\cong C_{12},\;
        C_{G}(Z)=G,\\
        \label{ex-centralizers-in-G'}
        C_{G'}(P)=Z,\;
        C_{G'}(Q_{1})\cong C_{4},\;
        C_{G'}(Z)=G'.
    \end{align}
    Note that any subgroup of $P$
    that is $\mathcal{F}_{P}(G)$-conjugate to $Q_{1}$
    is fully normalized
    since it is a normal subgroup of $P$.

    Let $M=S(G\times G',\Delta P)$.
    We show the following:
    \begin{lemma}\label{ex-Scott-delta-relative-Brauer-indecomposable}
        The Scott module $M$ is relatively $\Delta Z$-Brauer indecomposable.
    \end{lemma}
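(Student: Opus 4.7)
The plan is to invoke \cref{generalization-Ishioka-Kunugi} applied to the group $G\times G'$, the $p$-subgroup $\Delta P$, the module $M=S(G\times G',\Delta P)$ and the subgroup $R=\Delta Z$. The saturation hypothesis holds because the identification $P\cong\Delta P$ induces an isomorphism $\mathcal{F}_{\Delta P}(G\times G')\cong\mathcal{F}_{P}(G)$ (using $\mathcal{F}_{P}(G)=\mathcal{F}_{P}(G')$), and $\mathcal{F}_{P}(G)$ is saturated by \cref{important-properties-fusion-system}. Since $\Delta Z\leq Z(G\times G')$, the subgroups of $\Delta P$ containing a $G\times G'$-conjugate of $\Delta Z$ are precisely those containing $\Delta Z$; the fully normalized representatives of their $\mathcal{F}$-conjugacy classes are $\Delta Z$, $\Delta Q_{1}$ and $\Delta P$, and in each case $N_{\Delta P}(\Delta Q)=\Delta P$ because $Z$, $Q_{1}$ and $P$ are all normal in $P$. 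The task reduces to showing, for $Q\in\{Z,Q_{1},P\}$, that $S(N_{G\times G'}(\Delta Q),\Delta P)\downarrow_{L_{Q}}^{N_{G\times G'}(\Delta Q)}$ is indecomposable, where $L_{Q}:=\Delta Q\cdot C_{G\times G'}(\Delta Q)$.

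The case $Q=Z$ is immediate since $N_{G\times G'}(\Delta Z)=L_{Z}=G\times G'$ and the module is simply $M$. For $Q=P$, the common isomorphism $N_{G}(P)\cong N_{G'}(P)\cong SL_{2}(3)$ together with $\mathcal{F}_{P}(G)=\mathcal{F}_{P}(G')$ allows one to choose an isomorphism $\phi\colon N_{G}(P)\to N_{G'}(P)$ with $\phi|_{P}=\mathrm{id}$; the subgroup $\{(g,\phi(g))\}$ of $N_{G\times G'}(\Delta P)$ is isomorphic to $SL_{2}(3)$, has trivial intersection with the central $1\times Z\cong C_{2}$, and the product exhausts $N_{G\times G'}(\Delta P)$, yielding an identification $N_{G\times G'}(\Delta P)\cong SL_{2}(3)\times C_{2}$ in which $\Delta P$ corresponds to $Q_{8}\times 1$ and $L_{P}$ to $Q_{8}\times C_{2}$. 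Then $k_{\Delta P}\!\uparrow^{N_{G\times G'}(\Delta P)}\cong k_{Q_{8}}\!\uparrow^{SL_{2}(3)}\otimes k[C_{2}]$ splits into three indecomposable summands indexed by the characters of the complementary $C_{3}$, and only the trivial-character summand has $k$ in its head; this is the Scott module, of dimension $2$, and it restricts indecomposably to $L_{P}$ since its endomorphism ring over the local ring $k[Q_{8}\times C_{2}]$ is local.

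The main work is the case $Q=Q_{1}$. Set $N:=N_{G\times G'}(\Delta Q_{1})$ and $L:=L_{Q_{1}}=C_{G}(Q_{1})\times C_{G'}(Q_{1})\cong C_{12}\times C_{4}$, so that $|N|=96$ and $[N:L]=2$. The Sylow $3$-subgroup $C_{3}\leq C_{12}\leq L$ is normal in $N$ with $N/C_{3}$ a $2$-group, so $O_{2'}(N)=C_{3}$; since the complementary $C_{2}$ inverts $C_{3}$, the group $N$ has exactly two blocks. Because $\Delta P\cdot L=N$, Mackey's formula collapses to the single summand $k_{\Delta P}\!\uparrow^{N}\!\downarrow_{L}^{N}\cong k_{\Delta Q_{1}}\!\uparrow^{L}$, and under the block decomposition of $kL=kC_{3}\otimes k[C_{4}\times C_{4}]$ this splits into three indecomposable $4$-dimensional summands $V_{0},V_{\omega},V_{\omega^{-1}}$ lying in distinct blocks of $kL$. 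Only $V_{0}$ lies in the unique block of $kL$ covered by the principal block $B_{0}(N)$, so the principal-block summand of $k_{\Delta P}\!\uparrow^{N}$ restricts to $V_{0}$; since $V_{0}$ is indecomposable over $kL$, this principal-block summand cannot admit a nontrivial $kN$-decomposition, and hence equals $S(N,\Delta P)$ with $S(N,\Delta P)\downarrow_{L}\cong V_{0}$ indecomposable. The principal obstacle is this case, where neither $\Delta P$ nor $L$ is a Sylow $2$-subgroup of $N$ and the block structure of $N$ must be identified in order to separate $V_{0}$ from $V_{\omega}\oplus V_{\omega^{-1}}$.
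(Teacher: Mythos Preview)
Your proof is correct and follows the same overall strategy as the paper: both invoke \cref{generalization-Ishioka-Kunugi} for the group $G\times G'$, the subgroup $\Delta P$, and $R=\Delta Z$, reducing to the three cases $Q\in\{Z,Q_{1},P\}$. The case $Q=Z$ is handled identically.

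The difference lies in the verifications for $Q=P$ and $Q=Q_{1}$. For $Q=P$, the paper simply notes that $S_{P}$ is a summand of $M(\Delta P){\downarrow}_{(\Delta P)C_{G\times G'}(\Delta P)}$ via \cref{Scott-of-normalizer-is-direct-summand-of-Brauer-construction} and then cites \cite[Lemma~4.3(ii)]{Kessar-Kunugi-Mitsuhashi-2011}, which gives indecomposability directly from the saturation of $\mathcal{F}_{\Delta P}(G\times G')$; you instead identify $N_{G\times G'}(\Delta P)\cong SL_{2}(3)\times C_{2}$ explicitly and compute the Scott module by hand. For $Q=Q_{1}$, the paper observes that $C_{G\times G'}(\Delta Q_{1})\cong C_{12}\times C_{4}$ is $2$-nilpotent, hence so is $N_{G\times G'}(\Delta Q_{1})$ (the quotient being $C_{2}$), and then applies \cite[Theorem~1.4]{Ishioka-Kunugi-2017}; you carry out a direct block-theoretic analysis of $k_{\Delta P}{\uparrow}^{N}{\downarrow}_{L}$ via Mackey. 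Your route is self-contained and avoids external citations, while the paper's is shorter and highlights that both cases fall under general structural lemmas that would apply well beyond this particular example. One small point: your assertion that $\phi$ can be chosen with $\phi|_{P}=\mathrm{id}$ is correct but deserves a word of justification (it follows, for instance, from $\mathrm{Hom}(SL_{2}(3),C_{2})=0$, which forces the complement to $1\times Z$ in $N_{G\times G'}(\Delta P)$ to be unique and hence to contain the normal subgroup $\Delta P$).
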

    Note that, in the example, \cref{ex-Scott-delta-relative-Brauer-indecomposable}
    immediately implies that $M$ is Brauer indecomposable
    since all the subgroups of $P$ contain $Z$
    except for the trivial subgroup.
    \begin{proof}[Proof of \cref{ex-Scott-delta-relative-Brauer-indecomposable}]
        We show by using \cref{generalization-Ishioka-Kunugi}.
        For any subgroup $Q$ of $P$,
        let
        \[
            S_{Q}=S(N_{G\times G'}(\Delta Q),N_{\Delta P}(\Delta Q))
            {\downarrow}_{(\Delta Q)C_{G\times G'}(\Delta Q)}^{}.
        \]
        We see immediately that $S_Q$ is indecomposable for $Q\in \{Z,P\}$.
        Indeed, if $Q=Z$ , then $N_{G\times G'}(\Delta Z)
        =(\Delta Z)C_{G\times G'}(\Delta Z)=G\times G'$
        and $N_{\Delta P}(\Delta Z)=\Delta P$,
        and hence $S_{Q}=\linebreak S(G\times G',\Delta P)$ is indecomposable.
        Consider the case $Q=P$.
        Then $S_{P}$ is a direct summand of
        $M(\Delta P){\downarrow}_{(\Delta P)C_{G\times G'}(\Delta P)}^{}$
        by \cref{Scott-of-normalizer-is-direct-summand-of-Brauer-construction},
        and $M(\Delta P){\downarrow}_{(\Delta P)C_{G\times G'}(\Delta P)}^{}$
        is indecomposable
        by \cite[Lemma 4.3 (ii)]{Kessar-Kunugi-Mitsuhashi-2011} as
        $\mathcal{F}_{\Delta P}(G\times G')
        \cong\mathcal{F}_{P}(G)=\mathcal{F}_{P}(G')$
        is saturated.
        Hence $S_{P}$ is indecomposable.
        
        Next, we consider the case $Q=Q_{1}$.
        We see from (\ref{ex-centralizers-in-G}) and (\ref{ex-centralizers-in-G'})
        that $C_{G\times G'}(\Delta Q)=\linebreak C_{G}(Q)\times C_{G'}(Q)$
        is $2$-nilpotent.
        This implies that $N_{G\times G'}(\Delta Q)$ is $2$-nilpotent
        as
        \linebreak$N_{G\times G'}(\Delta Q)/C_{G\times G'}(\Delta Q)
        \cong N_{G}(Q)/C_{G}(Q)
        \cong C_{2}$.
        Hence it follows from \cite[Theorem 1.4]{Ishioka-Kunugi-2017} that
        $S_{Q}$
        is indecomposable.
    \end{proof}
    
    If $g\in G$ with $Q_{1}^{g}\leq P$,
    then we see from (\ref{ex-centralizers-in-G}), (\ref{ex-centralizers-in-G'}),
    and the assumption that $\mathcal{F}_{P}(G)=\mathcal{F}_{P}(G')$
    that $Q_{1}^{g}$ is a common Sylow $2$-subgroup of
    $C_{G}(Q_{1}^{g})$ and $C_{G'}(Q_{1}^{g})$.
    Hence $C_{G}(Q)$ and $C_{G'}(Q)$ have a common Sylow $2$-subgroup
    for any subgroup $Q$ of $P$ properly containing $Z$.
    We show the following:
    \begin{lemma}\label{ex-Morita-eqiv-for-centralizers}
        Let $Q$ be any subgroup of $P$ properly containing $Z$,
        and $P_{Q}$ a common Sylow $2$-subgroup of $C_{G}(Q)$ and $C_{G'}(Q)$.
        Then the pair of $S(C_{G\times G'}(\Delta Q),\Delta P_{Q})$ and its dual
        induces a Morita equivalence between $B_{0}(C_{G}(Q))$ and $B_{0}(C_{G'}(Q))$.
    \end{lemma}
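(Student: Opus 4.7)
The plan is to reduce to the two $\mathcal{F}_{P}(G)$-conjugacy class representatives $Q\in\{Q_{1},P\}$ and handle each explicitly; this reduction is valid because both the Scott module construction and the Morita equivalence property are $(G\times G')$-conjugation invariant (via conjugation isomorphisms of centralizers), and every subgroup of $P$ properly containing $Z$ is $\mathcal{F}_{P}(G)$-conjugate to either $Q_{1}$ or $P$. Throughout I would use the identity $C_{G\times G'}(\Delta Q)=C_{G}(Q)\times C_{G'}(Q)$.

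For $Q=P$: here $C_{G}(P)=C_{G'}(P)=Z\cong C_{2}$, so $P_{Q}=Z$ and $B_{0}(kZ)=kZ$. A direct two-dimensional computation shows that $S(Z\times Z,\Delta Z)\cong k_{\Delta Z}{\uparrow}^{Z\times Z}$ is isomorphic to the regular $(kZ,kZ)$-bimodule $kZ$ (both $(z,1)$ and $(1,z)$ swap the two basis vectors of the induced module), which visibly induces the identity Morita self-equivalence of $kZ$.

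For $Q=Q_{1}$: set $H:=C_{G}(Q_{1})\cong C_{12}$ and $H':=C_{G'}(Q_{1})=Q_{1}\cong C_{4}$, so $P_{Q}=Q_{1}$. Since $H$ is abelian with Sylow $2$-subgroup $Q_{1}$, I write $H=Q_{1}\times K$ with $K\cong C_{3}$. Then $B_{0}(kH)\cong kQ_{1}$ via the principal block idempotent $e_{0}\in kK$ (the trivial character of $K$), and $B_{0}(kH')=kQ_{1}$. To identify the Scott module $N:=S(H\times H',\Delta Q_{1})$, I would use that $H\times H'=K\times Q_{1}\times Q_{1}$ with $\Delta Q_{1}=\{(1,x,x)\mid x\in Q_{1}\}$ and that the inclusions $\Delta Q_{1}\leq K\times\Delta Q_{1}\leq H\times H'$ yield
\[
    k_{\Delta Q_{1}}{\uparrow}^{H\times H'}
    \cong kK\otimes_{k}k_{\Delta Q_{1}}{\uparrow}^{Q_{1}\times Q_{1}}
    \cong\bigoplus_{\chi\in\widehat{K}}k_{\chi}\otimes_{k}kQ_{1},
\]
where $kQ_{1}$ denotes the regular $k[Q_{1}\times Q_{1}]$-bimodule, which is indecomposable since $k[Q_{1}\times Q_{1}]$ is local with simple top $k$. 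The Scott summand, characterized by having $k$ as a quotient, is $N\cong k_{K}\otimes kQ_{1}$, on which $K$ acts trivially. Since $e_{0}$ then acts as the identity on $N$, the bimodule $N$ viewed over the principal blocks is isomorphic to $kQ_{1}$ as a $(kQ_{1},kQ_{1})$-bimodule, which is the identity Morita equivalence bimodule between $B_{0}(kH)\cong kQ_{1}$ and $B_{0}(kH')=kQ_{1}$.

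The main (mild) obstacle is the explicit identification of the principal-block summand of the Scott module in the case $Q=Q_{1}$; this is transparent because $K$ is a normal $2'$-subgroup of $H\times H'$ disjoint from $\Delta Q_{1}$, which cleanly decouples the character decomposition of $kK$ from the $(Q_{1}\times Q_{1})$-action. The case $Q=P$ reduces to the trivial self-Morita-equivalence of $kC_{2}$.
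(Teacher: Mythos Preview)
Your argument is correct, but it takes a different route from the paper. The paper's proof is a single sentence: since for every subgroup $Q$ of $P$ properly containing $Z$ the centralizers $C_{G}(Q)$ and $C_{G'}(Q)$ are $2$-nilpotent (by (\ref{ex-centralizers-in-G}) and (\ref{ex-centralizers-in-G'})), the conclusion follows directly from \cite[Lemma~3.1]{Koshitani-Lassueur-2020}, which says that for two $p$-nilpotent groups with a common Sylow $p$-subgroup the relevant Scott module always induces a Morita equivalence of principal blocks. In particular the paper never reduces to representatives $Q\in\{Q_{1},P\}$ nor performs any explicit bimodule identification. Your approach instead computes the Scott module by hand in each case, exploiting that $C_{G}(Q)\times C_{G'}(Q)$ is a direct product of a $2'$-group with $P_{Q}\times P_{Q}$, so that the induced module visibly splits along characters of the $2'$-part and the Scott summand is the regular $(kP_{Q},kP_{Q})$-bimodule. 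This is more elementary and self-contained (it avoids citing the Koshitani--Lassueur lemma), but it is tailored to the specific abelian/nilpotent structure of the centralizers in this example; the paper's citation handles all $p$-nilpotent centralizers uniformly without any case analysis.
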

    \begin{proof}
        Since $C_{G}(Q)$ and $C_{G'}(Q)$ are $2$-nilpotent
        for any subgroup $Q$ of $P$ properly containing $Z$,
        the result follows
        (see \cite[Lemma 3.1]{Koshitani-Lassueur-2020}).
    \end{proof}

    Next, we show the following:
    \begin{lemma}\label{ex-relative-stable-equivalence}
        The pair $(M,M^{*})$ induces a relative $Z$-stable equivalence
        of Morita type between $B$ and $B'$.
    \end{lemma}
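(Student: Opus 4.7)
The plan is to apply Theorem \ref{generalization-Broue-gluing}, so that it suffices to verify condition (i): for every subgroup $Q$ of $P$ properly containing $Z$, the pair $(M(\Delta Q), M(\Delta Q)^*)$ induces a Morita equivalence between $B_0(kC_G(Q))$ and $B_0(kC_{G'}(Q))$. Up to $\mathcal{F}_P(G)$-conjugacy, the subgroups of $P$ properly containing $Z$ are $Q_1$ and $P$, both of which are normal in $P$ and hence fully normalized in $\mathcal{F}_P(G)$; by \cref{remark-Brauer-construction-conjugate} it is enough to treat these two representatives.

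To identify the Brauer construction $M(\Delta Q)$, I would combine \cref{ex-Scott-delta-relative-Brauer-indecomposable} with the final assertion of \cref{generalization-Ishioka-Kunugi}. Since the fusion system $\mathcal{F}_{\Delta P}(G \times G')$ is saturated (it is isomorphic to $\mathcal{F}_P(G)$, which is saturated as $P$ is a Sylow $p$-subgroup of $G$) and $M$ is relatively $\Delta Z$-Brauer indecomposable, the theorem yields an isomorphism $M(\Delta Q) \cong S(N_{G \times G'}(\Delta Q), N_{\Delta P}(\Delta Q))$ of $k[N_{G \times G'}(\Delta Q)]$-modules. Restricting this to $C_{G \times G'}(\Delta Q) = C_G(Q) \times C_{G'}(Q)$ and taking the summand lying in the principal-block bimodule, one should recover the Scott bimodule $S(C_G(Q) \times C_{G'}(Q), \Delta P_Q)$ appearing in \cref{ex-Morita-eqiv-for-centralizers}, where $P_Q$ is a common Sylow $2$-subgroup of $C_G(Q)$ and $C_{G'}(Q)$.

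Once this identification is in place, \cref{ex-Morita-eqiv-for-centralizers} supplies the required Morita equivalence between $B_0(kC_G(Q))$ and $B_0(kC_{G'}(Q))$ induced by $(M(\Delta Q), M(\Delta Q)^*)$ for each $Q \in \{Q_1, P\}$, and \cref{generalization-Broue-gluing} then delivers the desired relative $Z$-stable equivalence of Morita type between $B$ and $B'$.

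The main obstacle will be the restriction step from $N_{G \times G'}(\Delta Q)$ down to $C_{G \times G'}(\Delta Q)$, together with the recognition of the Scott bimodule there. For $Q = P$ the centralizers collapse to $Z$ and the identification is immediate. For $Q = Q_1$, the $2$-nilpotency of $C_G(Q_1) \cong C_{12}$ and $C_{G'}(Q_1) \cong C_4$ (already exploited in the proof of \cref{ex-Morita-eqiv-for-centralizers}) should pin down the principal-block bimodule summand by its vertex $\Delta P_{Q_1} = \Delta Q_1$, via the standard characterization of Scott modules by a Sylow vertex, so this step should go through without essential difficulty.
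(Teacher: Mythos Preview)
Your proposal is correct and follows the same strategy as the paper: invoke \cref{generalization-Broue-gluing}, identify $M(\Delta Q)$ with the normalizer Scott module via \cref{generalization-Ishioka-Kunugi} and \cref{ex-Scott-delta-relative-Brauer-indecomposable}, and then match its restriction to $C_{G\times G'}(\Delta Q)$ with the bimodule of \cref{ex-Morita-eqiv-for-centralizers}. One caution on phrasing: \cref{generalization-Broue-gluing} requires the \emph{entire} restriction $M(\Delta Q){\downarrow}_{C_{G\times G'}(\Delta Q)}$ (not merely a principal-block summand) to coincide with $S(C_{G\times G'}(\Delta Q),\Delta P_Q)$, and the paper secures this by a short Mackey computation---for $Q=P$ the restriction lands in copies of $k_{\Delta Z}{\uparrow}^{Z\times Z}$ and indecomposability forces a single copy, while for $Q=Q_1$ a single double coset does the job---so your ``immediate'' for $Q=P$ still needs that one line of work.
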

    \begin{proof}
        We show by using \cref{generalization-Broue-gluing}.
        By \cref{ex-Morita-eqiv-for-centralizers},
        it suffices to show that
        \[
            S(C_{G\times G'}(\Delta Q),\Delta P_{Q})
            \cong M(\Delta Q){\downarrow}_{C_{G\times G'}(\Delta Q)}
        \]
        for any subgroup $Q$ of $P$ properly containing $Z$,
        where $P_{Q}$ is a common Sylow $2$-subgroup of $C_{G}(Q)$ and $C_{G'}(Q)$.
        Note that if $Q$ is fully normalized,
        then it follows from \cref{generalization-Ishioka-Kunugi}
        and \cref{ex-Scott-delta-relative-Brauer-indecomposable}
        that
        \[
            S(N_{G\times G'}(\Delta Q),N_{\Delta P}(\Delta Q))
            {\downarrow}_{C_{G\times G'}(\Delta Q)}
            \cong M(\Delta Q){\downarrow}_{C_{G\times G'}(\Delta Q)}
        \]
        is indecomposable.
        
        First, we consider the case $Q=P$.
        Then we have that $C_{G}(P)=Z$ and $C_{G'}(P)=Z$,
        and that
        \[
            S(N_{G\times G'}(\Delta P),\Delta P){\downarrow}_{Z\times Z}^{}
            \mid
            k_{\Delta P}{\uparrow}^{N_{G\times G'}(\Delta P)}{\downarrow}_{Z\times Z}^{}
            \cong \bigoplus_{t\in[\Delta P\backslash N_{G\times G'}(\Delta P)/Z\times Z]}
            k_{\Delta Z}{\uparrow}^{Z\times Z}.
        \]
        This implies that
        $S(N_{G\times G'}(\Delta P),\Delta P){\downarrow}_{Z\times Z}^{}
        \cong S(Z\times Z,\Delta Z)$
        as $k_{\Delta Z}{\uparrow}^{Z\times Z}=S(Z\times Z,\Delta Z)$.
        Hence we have
        $S(Z\times Z,\Delta Z)\cong M(\Delta P){\downarrow}_{Z\times Z}^{}$.
        
        Next, let $Q$ be any subgroup of $P$
        that is $\mathcal{F}_{P}(G)$-conjugate to $Q_{1}$.
        Since
        \linebreak$|\Delta P\backslash N_{G\times G'}(\Delta Q)/C_{G\times G'}(\Delta Q)|=1$,
        we see that
        \[
            S(N_{G\times G'}(\Delta Q),N_{\Delta P}(\Delta Q))
            {\downarrow}_{C_{G\times G'}(\Delta Q)}
            \mid k_{\Delta P}{\uparrow}^{N_{G\times G'}(\Delta Q)}{\downarrow}_{C_{G\times G'}(\Delta Q)}
            \cong k_{\Delta Q}{\uparrow}^{C_{G\times G'}(\Delta Q)}.
        \]
        Hence as in the case $Q=P$, we see that
        $S(C_{G\times G'}(\Delta Q),\Delta Q)
        \cong M(\Delta Q){\downarrow}_{C_{G\times G'}(\Delta Q)}^{}$.
    \end{proof}

    We describe the structures of the restrictions to $P$
    of the nontrivial simple $B$-modules.
    The principal block $B$ has three simple modules $k_{G}$, $S_{1}$, $S_{2}$,
    where $\mathrm{dim}\,S_{i}=5$, $i=1,2$
    (see \cite[Section 9.4.4]{Bonnafe-book-2011}).
    We show the following:
    \begin{lemma}\label{ex-structures-restrictions-simple-modules}
        For $i=1$, $2$,
        \[
            S_{i}{\downarrow}_{P}^{}=k_{P}\oplus V_{i},
        \]
        where $V_{i}$ is an indecomposable $kP$-module with vertex $Z$.
    \end{lemma}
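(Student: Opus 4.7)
The plan is to reduce the lemma to a question about $5$-dimensional modules over the Klein four group and then apply Higman's criterion together with a concrete character-theoretic computation. Since $Z \leq Z(G) \cap P$ has order $2$ and $\mathrm{char}\,k = 2$, the generator of $Z$ acts on the simple module $S_i$ by a scalar lying in the second roots of unity of $k$, which are all trivial. Hence $Z$ acts trivially on $S_i$, and $S_i|_P$ is inflated from a $5$-dimensional module $\bar{S}_i$ of $\bar P := P/Z \cong C_2 \times C_2$. The trivial $k\bar P$-module pulls back to $k_P$, and the regular module $k\bar P \cong k[P/Z]$ pulls back to $k_Z\!\uparrow^P$, which is indecomposable (as $k[P/Z]$ is a local algebra) and has vertex exactly $Z$ (it is induced from $kZ$ but of dimension $4 < 8 = |P|$, hence non-projective over $kP$). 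Therefore it suffices to show $\bar{S}_i \cong k_{\bar P} \oplus k\bar P$ as $k\bar P$-modules.

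The key observation is that any $5$-dimensional $k\bar P$-module containing a nonzero projective direct summand must be isomorphic to $k_{\bar P} \oplus k\bar P$: projective $k\bar P$-modules have dimension a multiple of $4$, so within total dimension $5$ the only possibility is a single $k\bar P$-summand, leaving a $1$-dimensional complement which is necessarily $k_{\bar P}$. By Higman's criterion, $\bar S_i$ contains a projective $k\bar P$-summand if and only if the relative trace map $\mathrm{Tr}_1^{\bar P} \colon \bar S_i \to \bar S_i^{\bar P}$ is nonzero; translating back to the level of $kP$ via the trivial $Z$-action, this is equivalent to the existence of some $s \in S_i$ with $\sum_{t \in [Z\backslash P]} t s \neq 0$ in $S_i$.

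The final step is verifying this non-vanishing, which is the main obstacle. I would carry it out via the character table and decomposition matrix of the principal $2$-block of $\mathrm{SL}_2(11)$, as referenced in \cite{Bonnafe-book-2011}: lifting $S_i$ to its $5$-dimensional ordinary character $\chi_i$ and computing the relative trace on a reduction modulo $2$ of an invariant lattice. The reduction to $\bar P$ and the dichotomy-via-Higman argument are entirely general, but the concrete non-vanishing computation relies on specific data about $\mathrm{SL}_2(11)$ and is the heart of the calculation.
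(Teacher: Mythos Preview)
Your reduction to $\bar P = P/Z \cong C_2 \times C_2$ is correct and elegant: once $\bar S_i$ is known to have a free $k\bar P$-summand, dimension forces $\bar S_i \cong k_{\bar P} \oplus k\bar P$, and inflation yields exactly the statement of the lemma with $V_i \cong k_Z{\uparrow}^P$. This is a genuinely different route from the paper, which works directly on $P=Q_8$: the paper first cites Erdmann \cite{Erdmann-1977} to show that each $S_i$ is a trivial source $kG$-module with vertex $P$, then computes the ordinary character of $S_i{\downarrow}_P$ as $2\chi_1+\chi_2+\chi_3+\chi_4$ (the $\chi_j$ being the linear characters of $Q_8$), and deduces indecomposability of $V_i$ by a positivity check on characters of $p$-permutation summands and the vertex from the vanishing of $\chi_1+\chi_2+\chi_3+\chi_4$ outside $Z$.

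The gap in your proposal is the final step. You propose to verify the non-vanishing of $\mathrm{Tr}_1^{\bar P}$ on $\bar S_i$ ``via the character table and decomposition matrix,'' but the ordinary character of a lift does not in general determine the $kP$-module structure of the reduction: different $\mathcal O P$-lattices affording the same character can reduce to non-isomorphic $kP$-modules, and whether $\mathrm{Tr}_Z^P$ survives modulo $2$ is lattice-sensitive. What makes the character decisive in the paper is precisely the trivial-source property of $S_i$ supplied by Erdmann: a $p$-permutation $kP$-module \emph{is} determined by its ordinary character, and then $2\chi_1+\chi_2+\chi_3+\chi_4$ forces $S_i{\downarrow}_P \cong k_P \oplus k_Z{\uparrow}^P$, which in your language says exactly $\bar S_i \cong k_{\bar P}\oplus k\bar P$ and hence gives the non-vanishing you need. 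Without that trivial-source input (or an explicit matrix realisation of $S_i$ on which to compute the trace by hand), the verification cannot be read off from the character and decomposition data alone.
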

    \begin{proof}
        By \cite[Theorem 3]{Erdmann-1977},
        each $S_{i}$, as a $k[G/Z]$-module, is a trivial source module
        with vertex $P/Z$ lying in $B_{0}(G/Z)$,
        and hence is a trivial source module
        with vertex $P$ lying in $B$.
        By \cite[Table 9.1]{Bonnafe-book-2011},
        $S_{1}$ and $S_{2}$ afford the irreducible characters
        ${R'}_{+}(\theta_{0})$ and ${R'}_{-}(\theta_{0})$, respectively.
        Let $\chi_{i}$, $i=1,\ldots,4$,
        be the linear characters of $Q_{8}$,
        where $\chi_{1}$ is the trivial character.
        We see from the character table of $Q_{8}$ that 
        ${R'}_{+}(\theta_{0}){\downarrow}_{P}
        ={R'}_{-}(\theta_{0}){\downarrow}_{P}
        =2\chi_{1}+\chi_{2}+\chi_{3}+\chi_{4}$.
        Hence if we write $S_{i}{\downarrow}_{P}=k_{P}\oplus V_{i}$
        for some $kP$-module $V_{i}$, $i=1$, $2$,
        then each $V_{i}$ affords the same character
        $\varphi=\chi_{1}+\chi_{2}+\chi_{3}+\chi_{4}$.
        
        We show that each $V_{i}$ is indecomposable.
        Suppose that $V_{i}$ is not indecomposable.
        Then there is a nontrivial partition
        $\{\chi_{j}\mid j=1,\ldots,4\}=I_{1}\sqcup\cdots\sqcup I_{\ell}$,
        $2\leq \ell\leq 4$,
        such that the characters $\sum_{\chi\in I_{m}}\chi$,
        $m=1,\ldots,\ell$,
        are precisely the characters
        afforded by indecomposable summands of $V_{i}$.
        Moreover, by \cite[Chapter II, Lemma 12.6 (ii)]{Landrock-book-1983},
        each $\sum_{\chi\in I_{m}}\chi$ must take nonnegative integers.
        However, we see from the character table of $Q_{8}$ that
        for any nontrivial partition
        $\{\chi_{j}\mid j=1,\ldots,4\}={I'_{1}}\sqcup\cdots\sqcup {I'_{\ell'}}$,
        there exists $m$ such that $\sum_{\chi\in I'_{m}}\chi$
        takes negative values, a contradiction.
        
        We see that $\varphi$ takes zero
        except for the elements of $Z$.
        Hence it follows from \cite[Chapter II, Lemma 12.6 (iii)]{Landrock-book-1983}
        that $Z$ is a vertex of each $V_{i}$.
    \end{proof}
    
    Finally, we show that $(M,M^{*})$ induces a Morita equivalence between $B$ and $B'$.
    By \cref{ex-relative-stable-equivalence} and \cref{generalization-Linckelmann} (iii),
    it suffices to show that the functor $-\otimes_{B}M$ sends the simple $B$-modules
    to simple $B'$-modules.
    It follows from
    \cref{generalization-Linckelmann} (ii)
    and \cref{equivalent-condition-having-Scott-as-direct-summand}
    that
    $k_{G}\otimes_{B}M\cong k_{G'}$.
    Since $M$ is a direct summand of $k_{\Delta P}{\uparrow}^{G\times G'}$,
    using \cref{ex-structures-restrictions-simple-modules},
    we have that for $i=1$, $2$,
    \[
        S_{i}\otimes_{B}M
        \mid S_{i}{\downarrow}_{P}{\uparrow}^{G'}
        \cong k_{P}{\uparrow}^{G'}\oplus V_{i}{\uparrow}^{G'}.
    \]
    Since $V_{i}{\uparrow}^{G'}$ is $Z$-projective,
    it follows from \cref{generalization-Linckelmann} (ii)
    that $S_{i}\otimes_{B}M$ is an indecomposable summand of $k_{P}{\uparrow}^{G'}$.
    We see that $k_{P}{\uparrow}^{G'}\cong k_{G'}\oplus T_{1}\oplus T_{2}$,
    where $T_{1}$ and $T_{2}$ are $1$-dimensional nontrivial simple $B'$-modules,
    and hence $S_{i}\otimes_{B}M$ is simple.
    Thus by \cref{generalization-Linckelmann} (iii),
    $(M,M^{*})$ induces a Morita equivalence between $B$ and $B'$.
  
    \section*{Acknowledgement}
    This work was supported by JSPS KAKENHI Grant Number JP18K03255.

    \vspace*{\baselineskip}

    \noindent Naoko Kunugi,
    Department of Mathematics,\\
    Tokyo University of Science,
    1-3 Kagurazaka, Shinjuku-ku, Tokyo, 162-8601, Japan\\
    E-mail: kunugi@rs.tus.ac.jp

    \vspace*{\baselineskip}

    \noindent
    Corresponding author:\\
    Kyoichi Suzuki,
    Department of Mathematics,
    Graduate School of Science,\\
    Tokyo University of Science,
    1-3 Kagurazaka, Shinjuku-ku, Tokyo, 162-8601, Japan\\
    E-mail: 1119703@ed.tus.ac.jp


\begin{thebibliography}{10}

        \bibitem{Bonnafe-book-2011}
        C.~Bonnaf{\'e}.
        \newblock {\em Representations of {{${\rm SL}_2(\Bbb F_q)$}}}, volume~13 of
          {\em Algebra and {{Applications}}}.
        \newblock {Springer-Verlag London, London}, 2011.
        
        \bibitem{Broto-Levi-Oliver-2003}
        C.~Broto, R.~Levi, and B.~Oliver.
        \newblock The homotopy theory of fusion systems.
        \newblock {\em J. Amer. Math. Soc.}, 16(4):779--856, 2003.
        
        \bibitem{broue-1985}
        M.~Brou{\'e}.
        \newblock On {{Scott}} modules and {{$p$}}-permutation modules: an approach
          through the {{Brauer}} morphism.
        \newblock {\em Proc. Amer. Math. Soc.}, 93(3):401--408, 1985.
        
        \bibitem{broue-1994}
        M.~Brou{\'e}.
        \newblock Equivalences of blocks of group algebras.
        \newblock In {\em Finite-dimensional algebras and related topics ({{Ottawa}},
          {{ON}}, 1992)}, volume 424 of {\em {{NATO Adv}}. {{Sci}}. {{Inst}}. {{Ser}}.
          {{C}}: {{Math}}. {{Phys}}. {{Sci}}.}, pages 1--26. {Kluwer Acad. Publ.,
          Dordrecht}, 1994.
        
        \bibitem{Carlson-Book-1996}
        J.~F. Carlson.
        \newblock {\em Modules and group algebras}.
        \newblock Lectures in {{Mathematics ETH Z\"urich}}. {Birkh\"auser Verlag,
          Basel}, 1996.
        
        \bibitem{Carlson-Peng-Wheeler-1998}
        J.~F. Carlson, C.~Peng, and W.~W. Wheeler.
        \newblock Transfer maps and virtual projectivity.
        \newblock {\em J. Algebra}, 204(1):286--311, 1998.
        
        \bibitem{Erdmann-1977}
        K.~Erdmann.
        \newblock Principal blocks of groups with dihedral {{Sylow}} {{$2$}}-subgroups.
        \newblock {\em Comm. Algebra}, 5(7):665--694, 1977.
        
        \bibitem{Ishioka-Kunugi-2017}
        H.~Ishioka and N.~Kunugi.
        \newblock Brauer indecomposability of {{Scott}} modules.
        \newblock {\em J. Algebra}, 470:441--449, 2017.
        
        \bibitem{Kessar-Kunugi-Mitsuhashi-2011}
        R.~Kessar, N.~Kunugi, and N.~Mitsuhashi.
        \newblock On saturated fusion systems and {{Brauer}} indecomposability of
          {{Scott}} modules.
        \newblock {\em J. Algebra}, 340:90--103, 2011.
        
        \bibitem{Koshitani-Lassueur-2020}
        S.~Koshitani and C.~Lassueur.
        \newblock Splendid {{Morita}} equivalences for principal 2-blocks with dihedral
          defect groups.
        \newblock {\em Math. Z.}, 294(1-2):639--666, 2020.
        
        \bibitem{Koshitani-Lassueur-2020-2}
        S.~Koshitani and C.~Lassueur.
        \newblock Splendid {{Morita}} equivalences for principal blocks with
          generalised quaternion defect groups.
        \newblock {\em J. Algebra}, 558:523--533, 2020.
        
        \bibitem{Landrock-book-1983}
        P.~Landrock.
        \newblock {\em Finite group algebras and their modules}, volume~84 of {\em
          London {{Mathematical Society Lecture Note Series}}}.
        \newblock {Cambridge University Press, Cambridge}, 1983.
        
        \bibitem{Linckelmann-1996}
        M.~Linckelmann.
        \newblock Stable equivalences of {{Morita}} type for self-injective algebras
          and {{$p$}}-groups.
        \newblock {\em Math. Z.}, 223(1):87--100, 1996.
        
        \bibitem{Linckelmann-2015}
        M.~Linckelmann.
        \newblock On stable equivalences with endopermutation source.
        \newblock {\em J. Algebra}, 434:27--45, 2015.
        
        \bibitem{Linckelmann-book-2018-vol1}
        M.~Linckelmann.
        \newblock {\em The block theory of finite group algebras. {{Vol}}. {{I}}},
          volume~91 of {\em London {{Mathematical Society Student Texts}}}.
        \newblock {Cambridge University Press, Cambridge}, 2018.
        
        \bibitem{Nagao-Tsushima-book-1989}
        H.~Nagao and Y.~Tsushima.
        \newblock {\em Representations of finite groups}.
        \newblock {Academic Press, Boston}, 1989.
        
        \bibitem{Okuyama-preprint}
        T.~Okuyama.
        \newblock A generalization of projective covers of modules over group algebras.
        
        \bibitem{Okuyama-preprint-1997}
        T.~Okuyama.
        \newblock Some examples of derived equivalent blocks of finite groups.
        \newblock 1997.
        
        \bibitem{rickard-1996}
        J.~Rickard.
        \newblock Splendid equivalences: derived categories and permutation modules.
        \newblock {\em Proc. London Math. Soc. (3)}, 72(2):331--358, 1996.
        
        \bibitem{Roberts-Shpectorov-2009}
        K.~Roberts and S.~Shpectorov.
        \newblock On the definition of saturated fusion systems.
        \newblock {\em J. Group Theory}, 12(5):679--687, 2009.
        
        \bibitem{Thevenaz-1985}
        J.~Th{\'e}venaz.
        \newblock Relative projective covers and almost split sequences.
        \newblock {\em Comm. Algebra}, 13(7):1535--1554, 1985.
        
        \bibitem{Thevenaz-book-1995}
        J.~Th{\'e}venaz.
        \newblock {\em {{$G$}}-algebras and modular representation theory}.
        \newblock Oxford {{Mathematical Monographs}}. {The Clarendon Press, Oxford
          University Press, New York}, 1995.
        
        \bibitem{Wang-Zhang-2018}
        L.~Wang and J.~Zhang.
        \newblock Relatively stable equivalences of {{Morita}} type for blocks.
        \newblock {\em J. Pure Appl. Algebra}, 222(9):2703--2717, 2018.
        
        \end{thebibliography}
\end{document}